     \def\section{\@startsection{section}{1}%
     \z@{.7\linespacing\@plus\linespacing}{.5\linespacing}%
     {\bfseries%\normalfont\scshape
     \centering }}
     \def\@secnumfont{\bfseries}
\newtheorem{theorem}{Theorem}
\newtheorem{lemma}{Lemma}
\newtheorem{corollary}{Corollary}
\theoremstyle{definition}
\theoremstyle{remark}
\newtheorem*{remark}{Remark}
\numberwithin{equation}{section}
\begin{document}

UDK: 519.218.2 \\

\title[Slowly varying functions in theory of Markov Branching Processes]
    {On application of slowly varying functions \\ with remainder in the theory of
        Markov \\ Branching Processes  with mean one and \\ infinite variance}

\author{{Azam Imomov and Abror Meyliyev}}
\address{Azam Abdurakhimovich Imomov and Abror Khujanazarovich Meyliyev
\newline\hphantom{iii} Karshi State University,
\newline\hphantom{iii} 17, Kuchabag street,
\newline\hphantom{iii} 180100 Karshi city, Uzbekistan.}
\email{{{imomov{\_}\,azam@mail.ru}}}
\thanks{\copyright \ 2019 Imomov A.A and Meyliyev A.Kh.}

\subjclass[2000] {Primary 60J80; Secondary 60J85}

\keywords{Markov Branching Process; Slowly Varying Functions;
    Generating Functions; Extinction time; Markov Q-process.}

\dedicatory{Dedicated to our Teachers}

\vspace{.4cm}

\begin{abstract}
    We investigate an application of slowly varying functions (in sense of Karamata) in
    the theory of Markov branching process. We treat the critical case so that the
    infinitesimal generating function of the process has the infinite second  moment,
    but it regularly varies with remainder. We improve the Basic Lemma of the theory
    of critical Markov branching process and refine well-known limit results.
\end{abstract}

\maketitle

\section{Introduction and main results}   \label{MySec:1}

    \subsection{Preliminaries}       \label{MySubsec:1.1}

    We consider the Markov Branching Process (MBP) to be the homogeneous continuous-time Markov process
    $\left\{Z(t), t \geq {0} \right\}$ with the  state space ${\mathcal S}_0 = \{0\} \cup {\mathcal S}$,
    where ${\mathcal S} \subset \mathbb{N}$ and $\mathbb{N}=\{1,2, \ldots \}$. The transition probabilities
    of the process
\[
    P_{ij}(t) := \mathbb{P}\left\{{{Z(t) = j} \bigl| {Z(0)=i}\bigr.}\right\}
\]
    satisfy the following branching property:
\begin{equation}           \label{1.1}
    P_{ij} (t) = P_{1j}^{i*} (t)
    \quad \parbox{2.8cm}{\textit{for all} {} $i,j \in {\mathcal S}$,}
\end{equation}
    where the asterisk denotes convolution. Herein transition
    probabilities $P_{1j}(t)$ are expressed by relation
\begin{equation}           \label{1.2}
    P_{1j} (\varepsilon ) = \delta _{1j}  + a_j \varepsilon  + o(\varepsilon )
    \quad \parbox{2cm}{\textit{as} {} $\varepsilon  \downarrow 0$,}
\end{equation}
    where $\delta _{ij}$ is Kronecker's delta function and $\left\{ {a_j } \right\}$ are intensities of
    individuals' transformation such that $a_j  \ge 0$ for $j \in {\mathcal S}_0 \backslash \{ 1\} $ and
\begin{equation*}
    0 < a_0  <  - a_1  = \sum\limits_{j \in {\mathcal S}_0 \backslash \{1\}}{a_j} < \infty.
\end{equation*}
    The MBP was defined first by Kolmogorov and Dmitriev {\cite{KolmDm}}; for more
    detailed information see {\cite[Ch.~III]{ANey}} and {\cite[Ch.~V]{Harris63}}.

    Defining the generating function (GF) $F(t;s) = \sum\nolimits_{j \in {\mathcal S}_0}{P_{1j}(t)s^j}$
    it follows from \eqref{1.1} and \eqref{1.2} that the process $\left\{Z(t)\right\}$ is determined
    by the infinitesimal GF $f(s)=\sum\nolimits_{j \in {\mathcal S}_0}{a_j s^j}$ for $s\in [0, 1)$.
    Moreover it follows from \eqref{1.2} that GF $F(t;s)$ is unique solution of the backward
    Kolmogorov equation ${\partial{F} \mathord{\left/{\vphantom{{\partial{F}}{\partial{t}}}}
    \right.\kern-\nulldelimiterspace}{\partial{t}}}=f\left({F}\right)$ with the boundary
    condition $F(0;s) = s$; see {\cite[p.~106]{ANey}}. If $m: = \sum\nolimits_{j \in {\mathcal S}}
    {ja_j }= f'(1-)$ is finite then $F(t;1)= 1$ and due to Kolmogorov equation it can be calculated
    that $\mathbb{E}\bigl[{{Z(t)}\left| {Z(0)=i}\right.}\bigr]= \sum\nolimits_{j \in{\mathcal S}}
    {jP_{ij}(t)} = ie^{mt}$. Last formula shows that long-term properties of MBP are various
    depending on value of parameter $m$. Hence the MBP is classified as \textit{critical}
    if $m = 0$ and \textit{sub-critical} or \textit{supercritical} if $m < 0$ or $m > 0$
    respectively. Monographs {\cite{AsHer}}--{\cite{Bingham}} and {\cite{Harris63}}
    are general references for mentioned and other classical facts on theory of MBP.

    In the paper we consider the critical case. Let $R(t;s)=1-F(t;s)$ and
\begin{equation*}
    q(t):=R(t;0) = \mathbb{P}\left\{{\mathcal{H}>t}\right\},
\end{equation*}
    where the variable $\mathcal{H} = \inf\left\{t: Z(t)=0\right\}$ denotes an extinction
    time of MBP. Then $q(t)$ is the survival probability of the process. Sevastyanov \cite{Sev51}
    proved that if $f{'''}(1-)<\infty$ then the following asymptotic representation holds:
\begin{equation}           \label{1.3}
    {{1} \over {R(t;s)}}-{{1}\over {1-s}}={{f{''}(1-)}\over {2}}{t}+{\mathcal{O}\bigl(\ln{t}\bigr)}
    \quad \parbox{2.2cm}{\textit{as} {} $t  \rightarrow \infty$}
\end{equation}
    for all $s\in[0,1)$; see {\cite[p.~72]{Sev51}}.

    Later on Zolotarev {\cite{Zol57}} has found a principally new result on asymptotic
    representation of $q(t)$ without the assumption of $f{''}(1-)<\infty$. Namely
    providing that $g(x)=f(1-x)$ is a regularly varying function at zero that is
\begin{equation*}
    \lim_{x\downarrow{0}} {{xg'(x)} \over {g(x)}} = \gamma
\end{equation*}
    with index $1<\gamma = 1+\alpha \leq{2}$, he has proved that
\begin{equation}           \label{1.4}
    {{q(t)} \over {f\left({1-q(t)}\right)}} \sim  {\alpha}t
    \quad \parbox{2cm}{\textit{as} {} $t \rightarrow {\infty}$.}
\end{equation}

    Further we assume that the infinitesimal GF $f(s)$ has the following representation:
\begin{equation}           \label{1.5}
    f(s) = (1 - s)^{1 + \nu} \mathcal{L}\left( {{{1} \over {1 - s}}}\right)
\end{equation}
    for all $s\in [0, 1)$, where $0 < \nu < 1$ and $\mathcal{L}(x)$ is slowly varying (SV)
    function at infinity (in sense of Karamata; see {\cite{SenetaRV}}).

    Pakes {\cite{Pakes2010}}, in connection with the proof of limit theorems has
    established, that if the condition \eqref{1.5} holds then
\begin{equation}           \label{1.6}
    {{1} \over {R\left(t;s\right)}} = {U\left(t+V\left({1} \over {1-s}\right)\right)},
\end{equation}
    where $V(x)=\mathcal{M}\left( 1-{1/x}\right)$ and $\mathcal{M}(s)$ is GF of invariant
    measure of MBP that is $\mathcal{M}(s)=\sum\nolimits_{j \in {\mathcal S}}{\mu_j {s^j}}$
    and $\sum\nolimits_{i \in {\mathcal S}}{\mu_i {P_{ij}(t)}}=\mu_j$, $j\in {\mathcal S}$.
    Function $U(y)$ is the inverse of $V(x)$. The formula \eqref{1.6} gives
    an alternative relation to \eqref{1.4}:
\begin{equation*}
    q(t) = {{1} \over {U\left(t\right)}}.
\end{equation*}

    The following lemma is a version of more recent result that was proved in
    {\cite[second part statement of Lemma~1]{Imomov17}}, in which the character of asymptotical
    decreasing of the function $R(t;s)$ seems to be more explicit rather than in \eqref{1.6}.

\begin{lemma}               \label{MyLem:1}
    If the condition \eqref{1.5} holds then
\begin{equation}           \label{1.7}
    R(t;s) = {{{\mathcal N}(t)} \over {(\nu t)^{{1 \mathord{\left/
    {\vphantom {1 \nu }} \right. \kern-\nulldelimiterspace} \nu }} }}
    \cdot \left[ {1 - {{M(t;s)} \over {\nu{t}}}} \right],
\end{equation}
    where
\begin{equation}           \label{1.8}
    {\mathcal N}^{\,\nu}(t) \cdot \mathcal{L}\left({{{\bigl(\nu t \bigr)^{{1
    \mathord{\left/ {\vphantom {1 \nu }} \right. \kern-\nulldelimiterspace}
    \nu }} } \over  {{\mathcal N}(t)}}} \right) \longrightarrow 1
    \quad \parbox{2.2cm}{\textit{as} {} $t  \rightarrow \infty$.}
\end{equation}
    Herein $M(t;0)=0$ for all $t>0$ and $M(t;s)\rightarrow {\mathcal{M}(s)}$ as
    $t\rightarrow {\infty}$, where $\mathcal{M}(s)$ is GF of invariant measures of MBP and
\begin{equation*}
    \mathcal{M}(s) = \int_1^{{1 \mathord{\left/ {\vphantom {1 {(1 - s)}}}
    \right. \kern-\nulldelimiterspace}{(1 - s)}}}
    {{{dx} \over {x^{1 - \nu }\mathcal{L}(x)}}}.
\end{equation*}
\end{lemma}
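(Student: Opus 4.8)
The plan is to collapse the backward Kolmogorov equation into a scalar autonomous ODE for $R(t;s)$ and integrate it in closed form. Writing $R=R(t;s)$, from $\partial F/\partial t=f(F)$ and \eqref{1.5} one gets
\[
    \frac{\partial R}{\partial t}=-f(1-R)=-R^{1+\nu}\,\mathcal{L}\!\left(\frac{1}{R}\right).
\]
Separating variables, integrating from the initial value $R(0;s)=1-s$, and substituting $x=1/R$ turns this into
\[
    \int_{1/(1-s)}^{1/R(t;s)}\frac{dx}{x^{1-\nu}\,\mathcal{L}(x)}=t .
\]
The lower-limit block is precisely $\mathcal{M}(s)$, so with $G(y):=\int_1^{y}x^{\nu-1}/\mathcal{L}(x)\,dx$ we obtain the compact relation $G\bigl(1/R(t;s)\bigr)=t+\mathcal{M}(s)$, i.e. $1/R(t;s)=W\bigl(t+\mathcal{M}(s)\bigr)$ with $W:=G^{-1}$; this is just \eqref{1.6} read off with $V=G$ and $U=W$. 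Putting $s=0$ (so $\mathcal{M}(0)=0$) gives $q(t)=1/W(t)$.

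Next I bring in Karamata's theory. The integrand $x^{\nu-1}/\mathcal{L}(x)$ is regularly varying of index $\nu-1\in(-1,0)$, so Karamata's theorem yields $G(y)\sim y^{\nu}/\bigl(\nu\mathcal{L}(y)\bigr)$; hence $G$ is regularly varying of index $\nu$ and its inverse $W$ is regularly varying of index $1/\nu$ with $W(u)^{\nu}\sim\nu u\,\mathcal{L}\bigl(W(u)\bigr)$. Defining $\mathcal{N}(t):=(\nu t)^{1/\nu}q(t)$ writes $q(t)$ in the required shape $\mathcal{N}(t)/(\nu t)^{1/\nu}$; substituting $W(t)=1/q(t)=(\nu t)^{1/\nu}/\mathcal{N}(t)$ into $W(t)^{\nu}\sim\nu t\,\mathcal{L}\bigl(W(t)\bigr)$ and cancelling $\nu t$ gives exactly \eqref{1.8}.

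Finally I read off the correction factor by \emph{defining} $M(t;s)$ through the target identity, equivalently $M(t;s)=\nu t\bigl(1-R(t;s)/q(t)\bigr)$, which makes $M(t;0)=0$ hold trivially. Since $R(t;s)/q(t)=W(t)/W\bigl(t+\mathcal{M}(s)\bigr)$,
\[
    M(t;s)=\nu t\cdot\frac{W\bigl(t+\mathcal{M}(s)\bigr)-W(t)}{W\bigl(t+\mathcal{M}(s)\bigr)}.
\]
Because $G$ is $C^{1}$ with $G'(y)=y^{\nu-1}/\mathcal{L}(y)$, so is $W$, with $W'(t)=W(t)^{1-\nu}\mathcal{L}\bigl(W(t)\bigr)$; the mean value theorem then gives $W(t+\mathcal{M}(s))-W(t)=\mathcal{M}(s)\,W'(\xi_t)$ for some $\xi_t\in(t,t+\mathcal{M}(s))$, while $W'(t)/W(t)=\mathcal{L}(W(t))/W(t)^{\nu}\sim 1/(\nu t)$. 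Collecting these, $M(t;s)\to\mathcal{M}(s)$.

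The delicate point is this last limit. The prefactor $\nu t$ multiplies a quantity known only asymptotically, so I must control the regularly varying ratios at the shifted arguments $\xi_t$ and $t+\mathcal{M}(s)$: by the uniform convergence theorem for regularly varying functions, $W'(\xi_t)/W'(t)\to1$ and $W(t+\mathcal{M}(s))/W(t)\to1$, since $\xi_t\sim t$ and the shift $\mathcal{M}(s)$ is fixed. The errors do not accumulate because the equivalence $W'(\xi_t)/W(t+\mathcal{M}(s))\sim 1/(\nu t)$ is multiplicative, so the factor $\nu t$ cancels it exactly and leaves $M(t;s)\to\mathcal{M}(s)$. Verifying these uniform estimates — and, should the sharpened form be wanted in the sequel, tracking the remainder in the slow variation of $\mathcal{L}$ — is the main technical burden.
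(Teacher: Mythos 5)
Your proof is correct, and it is necessarily your own construction: this paper never proves Lemma~\ref{MyLem:1} at all — it imports it from \cite{Imomov17}, with the exact-solution formula \eqref{1.6} credited to \cite{Pakes2010} — so there is no in-paper proof to match against. Your route (separate variables in $\partial R/\partial t=-R^{1+\nu}\mathcal{L}(1/R)$ to get the exact identity $G\bigl(1/R(t;s)\bigr)=t+\mathcal{M}(s)$; apply Karamata's theorem to get $G(y)\sim y^{\nu}/\bigl(\nu\mathcal{L}(y)\bigr)$; set $\mathcal{N}(t)=(\nu t)^{1/\nu}q(t)$ and $M(t;s)=\nu t\bigl(1-R(t;s)/q(t)\bigr)$; finish with the mean value theorem and the uniform convergence theorem) is the same germ as the paper's own Lemma~\ref{MyLem:2}, which also integrates a transformed Kolmogorov equation exactly — but there under the additional hypothesis $[\Lambda_{\delta}]$, precisely so that the error term becomes explicit. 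Your argument correctly uses only \eqref{1.5}, which is all that Lemma~\ref{MyLem:1} assumes; the price is that you obtain $M(t;s)\to\mathcal{M}(s)$ with no rate, which is exactly the deficiency the paper's Basic assumptions are designed to remove in Theorems~\ref{MyTh:1}--\ref{MyTh:4}. Two points you should make explicit to close the argument fully: (i) the appeal to the uniform convergence theorem for $W'(\xi_t)/W'(t)\to 1$ needs the observation that $W'$ is itself regularly varying, of index $(1-\nu)/\nu$, which follows from $W'(u)=W(u)^{1-\nu}\mathcal{L}\bigl(W(u)\bigr)$ together with the fact that $\mathcal{L}\circ W$ is slowly varying (composition of a slowly varying function with a regularly varying one of positive index); and (ii) the clause of the lemma identifying $\mathcal{M}$ as the GF of an invariant measure is left unaddressed — though your exact identity yields it almost for free, since it gives the Abel-type functional equation $\mathcal{M}\bigl(F(t;s)\bigr)=G\bigl(1/R(t;s)\bigr)=\mathcal{M}(s)+t$, which is the classical characterization of that GF.
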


\subsection{Aim and Basic assumptions}   \label{MySubsec:1.2}

    The representation \eqref{1.5} implies that the second moment $2b:=f{''}(1-)=\infty$.
    If $b < \infty$ then it takes place with $\nu=1$ and $\mathcal{L}(t)\rightarrow{b}$
    as $t \to \infty$ and we can write asymptotic formula in type of \eqref{1.3}. This
    circumstance suggests that we can look for some sufficient condition such that an
    asymptotic relation similar to \eqref{1.3} will be true provided that \eqref{1.5}
    holds. So the aim of the paper is to improve the Lemma~\ref{MyLem:1} and
    thereafter to refine \eqref{1.4} and to improve some earlier well-known
    results by imposing an additional condition on the function $\mathcal{L}(s)$.

    Let
\begin{equation*}
    \Lambda (y):= y^\nu  \mathcal{L}\left( {{1 \over {y}}} \right)
\end{equation*}
    for $y\in (0, 1]$ and rewrite \eqref{1.5} as
\begin{equation*}
    f(1-y) = y\Lambda{(y)}.   \leqno[\textsf {$f_\nu$}]
\end{equation*}
    Note that the function $y\Lambda (y)$ is positive, tends to zero and has a monotone
    derivative so that ${{y\Lambda '(y)} \mathord{\left/ {\vphantom {{y\Lambda '(y)}
    {\Lambda (y)}}}\right. \kern-\nulldelimiterspace} {\Lambda (y)}} \to \nu$ as
    $y \downarrow 0$; see {\cite[p.~401]{Bingham}}. Thence it is natural to write
\begin{equation*}
    {{y\Lambda '(y)} \over {\Lambda (y)}} = \nu + \delta(y),    \leqno[\textsf {$\Lambda_{\delta}$}]
\end{equation*}
    where $\delta(y)$ is continuous and $\delta(y) \to 0$ as $y \downarrow 0$.

    Throughout the paper $\left[{f_{\nu}} \right]$ and
    $\left[{\Lambda_{\delta}} \right]$ are our \textit{Basic assumptions}.

    Since ${{{\mathcal{L}\left( {\lambda x} \right)}} \mathord{\left/ {\vphantom {{{\mathcal{L}
    \left( {\lambda x}\right)}}{{\mathcal{L}\left(x \right)}}}}\right.\kern-\nulldelimiterspace}
    {{\mathcal{L}\left(x\right)}}} \to {1}$ as $x \to \infty$ for each $\lambda >0$ we can write
\begin{equation}          \label{1.9}
    {{\mathcal{L}\left( {\lambda x} \right)} \over {\mathcal{L}(x)}} = 1 + \varrho(x),
\end{equation}
    where $\varrho (x) \to 0$ as $x \to \infty$. If there is some positive function
    $g(x)$ so that $g(x) \to 0$ and $\varrho (x)={\mathcal{O}} \bigl(g(x)\bigr)$ as $x \to \infty $,
    then $\mathcal{L}(x)$ is said to be \textit{SV-function with remainder} at infinity;
    see {\cite[p.~185, condition SR1]{Bingham}}. As we can see below, if the function
    $\delta(y)$ is known it will be possible to estimate a decreasing rate of the remainder $\varrho (x)$.

    Using that $\Lambda (1)=\mathcal{L}(1)=a_0$ integration $\left[{\Lambda_{\delta}} \right]$ yields
\begin{equation*}
    \Lambda(y)=a_{0}y^{\nu} \exp{\int\limits_1^y {{{\delta(u)}\over{u}}\textrm{d}u}}.
\end{equation*}
    Therefore we have
\begin{equation*}
    \mathcal{L}\left( {{1 \over y}} \right)
    = a_{0} \exp {\int\limits_1^y {{{\delta(u)} \over {u}}\textrm{d}u}}.
\end{equation*}
    Changing variable as $u = {{1} \mathord{\left/ {\vphantom {{1} t}}
    \right. \kern-\nulldelimiterspace} t}$ in the integrand gives
\begin{equation}          \label{1.10}
    \mathcal{L}\left( x \right)
    = a_{0} \exp {\int\limits_1^x {{{\varepsilon (t)} \over {t}}\textrm{d}t}},
\end{equation}
    where $\varepsilon(t)=-\delta(1/t)$ and $\varepsilon(t) \to 0$ as $t \to \infty$.
    It follows from \eqref{1.9} and \eqref{1.10} that
\begin{equation*}
    {{\mathcal{L}(\lambda x)} \over {\mathcal{L}(x)}}
    = \exp \int\limits_{x}^{\lambda x} {{{\varepsilon (t)} \over t}\textrm{d}t}
    = 1 + \varrho (x) \quad \parbox{2.2cm}{\textit{as} {} $x  \to \infty$}
\end{equation*}
    for each $\lambda > 0$, where $\varrho(x) \to 0$ as $x \to \infty$. Thus
\begin{equation*}
    \int\limits_{x}^{\lambda x} {{{\varepsilon (t)} \over t}\textrm{d}t} =
    \ln \left[{1 + \varrho (x)}\right] = \varrho (x)+{\mathcal O}
    \left( {\varrho ^2 (x)} \right)
    \quad \parbox{2.2cm}{\textit{as} {} $x  \to \infty$.}
\end{equation*}
    Applying the mean value theorem to the left-hand side of the
    last equality we can assert that
\begin{equation}          \label{1.11}
    \varepsilon(x)={\mathcal O}\left({\varrho(x)}\right)
    \quad \parbox{2.2cm}{\textit{as} {} $x  \to \infty$.}
\end{equation}

    Thus the assumption $\left[{\Lambda_{\delta}}\right]$ provides that $\mathcal{L}(s)$
    to be an SV-function at infinity with the remainder in the form of
    $\varrho (x) = {\mathcal O}\left( {\delta \bigl(1/x\bigr)} \right)$ as $x  \to \infty$.

\subsection{Results}   \label{MySubsec:1.3}

    Our results appear due to an improvement of the Lemma~\ref{MyLem:1}
    under the Basic assumptions. Let
\begin{equation*}
    {\mho(t;s)} := {\int\limits_0^t {{\delta\left(R(u;s)\right)}\textrm{d}u}},
    \quad {\mho(t)} :={\mho(t;0)}.
\end{equation*}
    Needless to say $R(t;s)\to {0}$ as $t \to \infty$, due to \eqref{1.7}.
    Therefore, since $\delta(y) \to 0$ as $y \downarrow 0$ we make sure of
\begin{equation*}
    {{\mho{(t;s)}} \over {t}} = {{1} \over {t}} {\int\limits_0^t {{\delta\left(R(u;s)\right)}\textrm{d}u}}=o(1)
    \quad \parbox{2.2cm}{\textit{as} {} $t \to \infty$.}
\end{equation*}
    Thus ${\mho{(t;s)}}=o(t)$ as $t \to \infty$. Herewith a more important interest represents the special case when
\begin{equation}         \label{1.12}
    \delta (y) = \Lambda (y).
\end{equation}

\begin{remark}
     The case \eqref{1.12} implies that $\mathcal{L}\left( x \right)$
     be an SV-function at infinity with the remainder in the form of
\begin{equation*}
    \varrho(x) = {\mathcal O}\left({{\mathcal{L}(x)} \over {x^{\nu}}}\right)
    \quad \parbox{2.2cm}{\textit{as} {} $x  \to \infty$.}
\end{equation*}
     So under the condition \eqref{1.12} our results appear for all
     SV-functions at infinity with remainder $\varrho(x)$ in the form above.
\end{remark}

\begin{theorem}                 \label{MyTh:1}
     Under the Basic assumptions
\begin{equation}         \label{1.13}
    {q(t)} = {{{\mathcal{N}}(t)} \over {\bigl(\nu t \bigr)^{{1 \mathord{\left/
    {\vphantom {1 \nu}}\right. \kern-\nulldelimiterspace} \nu}}}}\left({1-{{\mho{\left(t\right)}}
    \over {\nu^{2}t}} + {o}\left({{\mho{(t)}} \over {t}}\right)} \right)
    \quad \parbox{2.2cm}{\textit{as} {} $t  \to \infty$,}
\end{equation}
    herein and everywhere ${\mathcal N}(t)$ is the SV-function
    satisfying \eqref{1.8}. In addition, if \eqref{1.12} holds then
\begin{equation}         \label{1.14}
    {q(t)} = {{{\mathcal{N}}(t)} \over {\bigl(\nu t \bigr)^{{1 \mathord{\left/
    {\vphantom {1 \nu}}\right. \kern-\nulldelimiterspace} \nu}}}}
    \left({1-{{\ln{\left[a_{0}\nu{t}+1\right]}} \over {\nu^{3}t}}
    + {o}\left({{\ln{t}} \over {t}}\right)} \right)
    \quad \parbox{2.2cm}{\textit{as} {} $t  \to \infty$.}
\end{equation}
\end{theorem}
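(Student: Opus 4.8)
The plan is to reduce the statement to the autonomous differential equation satisfied by $R(t;s)$ and to integrate it exactly, so that the remainder encoded in $\left[{\Lambda_{\delta}}\right]$ can be tracked explicitly. First I would combine the backward Kolmogorov equation $\partial F/\partial t=f(F)$ with $R=1-F$ and the Basic assumption $\left[{f_{\nu}}\right]$, namely $f(1-y)=y\Lambda(y)$, to get
\begin{equation*}
\frac{\partial R}{\partial t}=-R(t;s)\,\Lambda\bigl(R(t;s)\bigr),
\qquad\text{so that}\qquad
\frac{\partial}{\partial t}\,R^{-\nu}(t;s)=\nu\,\mathcal{L}\!\left(\frac{1}{R(t;s)}\right),
\end{equation*}
where I used $\Lambda(y)=y^{\nu}\mathcal{L}(1/y)$. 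Putting $s=0$, integrating from $0$ to $t$ with $q(0)=1$, and substituting $g=1/q(u)$, this yields the exact inversion formula
\begin{equation*}
t=\int_{1}^{1/q(t)}\frac{g^{\nu-1}}{\mathcal{L}(g)}\,\mathrm{d}g .
\end{equation*}

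The core of the proof is the second-order expansion of this integral. I would integrate by parts with $v=g^{\nu}/\nu$, using $\mathcal{L}(1)=a_{0}$ and the identity $g\mathcal{L}'(g)/\mathcal{L}(g)=\varepsilon(g)$ from \eqref{1.10}, to obtain
\begin{equation*}
t=\frac{G^{\nu}}{\nu\,\mathcal{L}(G)}-\frac{1}{\nu a_{0}}+\frac{1}{\nu}\int_{1}^{G}\frac{g^{\nu-1}}{\mathcal{L}(g)}\,\varepsilon(g)\,\mathrm{d}g,
\qquad G:=\frac{1}{q(t)} .
\end{equation*}
The decisive point is that along the trajectory $u\mapsto G(u)=1/q(u)$ one has $\mathrm{d}u=g^{\nu-1}\mathcal{L}(g)^{-1}\,\mathrm{d}g$ while $\varepsilon(g)=-\delta(1/g)$, so the last integral is precisely $-\mho(t)$:
\begin{equation*}
\frac{1}{\nu}\int_{1}^{G}\frac{g^{\nu-1}}{\mathcal{L}(g)}\,\varepsilon(g)\,\mathrm{d}g=-\frac{1}{\nu}\int_{0}^{t}\delta\bigl(q(u)\bigr)\,\mathrm{d}u=-\frac{\mho(t)}{\nu}.
\end{equation*}
Rearranging gives the refinement of Lemma~\ref{MyLem:1} that drives everything:
\begin{equation*}
q^{-\nu}(t)=\nu t\,\mathcal{L}\!\left(\frac{1}{q(t)}\right)\left[1+\frac{\mho(t)}{\nu t}+\mathcal{O}\!\left(\frac{1}{t}\right)\right].
\end{equation*}

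To land on \eqref{1.13} I would take $\mathcal{N}(t):=\mathcal{L}(1/q(t))^{-1/\nu}$ and check it is admissible, i.e. satisfies \eqref{1.8}: since $(\nu t)^{1/\nu}/\mathcal{N}(t)=q(t)^{-1}(1+o(1))$ and $\mathcal{L}$ is slowly varying, $\mathcal{N}^{\nu}(t)\,\mathcal{L}((\nu t)^{1/\nu}/\mathcal{N}(t))\to1$. Inserting this $\mathcal{N}$ and expanding $[1+\mho(t)/(\nu t)+\mathcal{O}(1/t)]^{-1/\nu}=1-\mho(t)/(\nu^{2}t)+o(\mho(t)/t)$ produces \eqref{1.13}, the $\mathcal{O}(1/t)$ constant being absorbed into the error (or into the slowly varying normaliser). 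For \eqref{1.14}, under \eqref{1.12} one has $\delta=\Lambda$ and, along solutions of the differential equation, $\Lambda(q(u))=-\mathrm{d}\ln q(u)/\mathrm{d}u$, whence $\mho(t)=-\ln q(t)=\tfrac{1}{\nu}\ln q^{-\nu}(t)$ exactly; replacing $q^{-\nu}(u)$ by $1+a_{0}\nu u$---legitimate to the required $o(\ln t)$ accuracy because here $\mathcal{L}(1/q(u))$ stays between $a_{0}$ and its finite limit---gives $\mho(t)=\tfrac{1}{\nu}\ln[a_{0}\nu t+1]+o(\ln t)$, which substituted into \eqref{1.13} yields \eqref{1.14}.

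The step I expect to be most delicate is the precision accounting in passing from the refined relation to \eqref{1.13}: one must ensure that replacing $\mathcal{L}(1/q(t))$ by the normaliser fixed by \eqref{1.8}, together with the $\mathcal{O}(1/t)$ constant from the integration by parts, introduces only an error of order $o(\mho(t)/t)$ and so does not spoil the displayed correction. This is exactly where the remainder bound \eqref{1.11}, $\varepsilon(x)=\mathcal{O}(\varrho(x))$ forced by $\left[{\Lambda_{\delta}}\right]$, is indispensable, since it controls $\mathcal{L}(\lambda(t)x)/\mathcal{L}(x)-1$ when the multiplier $\lambda(t)\to1$ at the rate dictated by $\mho(t)/t$.
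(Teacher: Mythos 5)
Your proof is correct, and its core coincides with the paper's: your rearranged relation $q^{-\nu}(t)=\nu t\,\mathcal{L}(1/q(t))\bigl[1+\mho(t)/(\nu t)+\mathcal{O}(1/t)\bigr]$ is precisely the paper's identity \eqref{3.1}, $1/\Lambda(q(t))=\nu t+1/a_{0}+\mho(t)$, which the paper obtains by setting $s=0$ in Lemma~\ref{MyLem:2} (equation \eqref{2.1}); from there both arguments finish with the same elementary expansion and the same admissible normaliser $\mathcal{N}(t)=\mathcal{L}^{-1/\nu}(1/q(t))$. The differences lie in how that identity is produced and how $\mho(t)$ is evaluated under \eqref{1.12}. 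The paper differentiates along the flow: from $[\Lambda_{\delta}]$ and $\partial F/\partial t=f(F)$ it gets $\mathrm{d}\bigl[1/\Lambda(R)-\nu t\bigr]=\delta(R)\,\mathrm{d}t$ and integrates, obtaining \eqref{2.1} for all $s\in[0,1)$; you instead invert the ODE into $t=\int_{1}^{1/q(t)}g^{\nu-1}\mathcal{L}(g)^{-1}\,\mathrm{d}g$, integrate by parts, and transport the error integral back to the time variable --- the same computation performed in a different order, but stated only at $s=0$ (sufficient for Theorem~\ref{MyTh:1}, though the paper reuses the general-$s$ statement \eqref{2.1} in Theorems~\ref{MyTh:2}--\ref{MyTh:4}). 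For \eqref{1.14} your route is in fact tidier than the paper's: under \eqref{1.12} you observe $\Lambda(q(u))=-\mathrm{d}\ln q(u)/\mathrm{d}u$, hence $\mho(t)=-\ln q(t)$ exactly, and first-order asymptotics of $q$ then give $\mho(t)=\nu^{-1}\ln[a_{0}\nu t+1]+o(\ln t)$; the paper instead substitutes the first-order asymptotics of $\Lambda(R(u;s))$ into the integral and integrates (its \eqref{2.6}). Both treatments share the same soft spot, which you at least flag explicitly: the $\mathcal{O}(1/t)$ contribution of $1/(a_{0}\nu t)$ is $o(\mho(t)/t)$ only when $\mho(t)\to\infty$ (as it is under \eqref{1.12}); the paper's ``elementary arguments'' make the same silent absorption.
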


\begin{theorem}                 \label{MyTh:2}
    Under the Basic assumptions
\begin{equation}         \label{1.15}
    {\bigl(\nu t \bigr)^{1 + {1 \mathord{\left/ {\vphantom {1 \nu }} \right.
    \kern-\nulldelimiterspace} \nu }}} \cdot P_{11}(t) = {{\mathcal{N}(t)}
    \over  {a_0}}\left({1-{{{1+\nu}}\over{\nu^{2}}}{{\mho{\left(t\right)}}
    \over{t}} + {o}\left({{\mho{(t)}} \over {t}}\right)} \right)
\end{equation}
    as $t \to \infty$. In addition, if \eqref{1.12} holds then
\begin{equation}         \label{1.16}
    {\bigl(\nu t \bigr)^{1 + {1 \mathord{\left/ {\vphantom {1 \nu }} \right.
    \kern-\nulldelimiterspace} \nu }}} \cdot P_{11}(t) = {{\mathcal{N}(t)}
    \over  {a_0}}\left( {1 - {{1 + \nu} \over  {\nu^3}}{{\ln{\left[a_{0}\nu{t}+1\right]}} \over t}
    + {o}\left({{\ln{t}} \over {t}}\right)} \right)
\end{equation}
    as $t \to \infty$.
\end{theorem}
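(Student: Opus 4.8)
The plan is to express $P_{11}(t)$ through the survival probability $q(t)=R(t;0)$ by an exact differential identity and then to substitute the expansion \eqref{1.13}. Since $F(t;s)=\sum_j P_{1j}(t)s^j$, we have $P_{11}(t)=\partial_sF(t;s)\big|_{s=0}=-\partial_sR(t;s)\big|_{s=0}$. The backward equation $\partial_tF=f(F)$ together with $[f_\nu]$ yields $\partial_tR=-R\,\Lambda(R)$. Differentiating this in $s$ and setting $W:=\partial_sR$, I obtain a \emph{linear} equation
\[
\partial_t W=-W\,\Lambda(R)\,\bigl[1+\nu+\delta(R)\bigr],
\]
where $[\Lambda_\delta]$ has been used to rewrite $R\Lambda'(R)$ as $\Lambda(R)\bigl(\nu+\delta(R)\bigr)$. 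With the boundary value $W(0;s)=-1$ this integrates to
\[
P_{11}(t)=\exp\Big(-(1+\nu)\int_0^t\Lambda\bigl(q(u)\bigr)\,du-\int_0^t\Lambda\bigl(q(u)\bigr)\delta\bigl(q(u)\bigr)\,du\Big).
\]

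The next step is to evaluate the two integrals in closed form. From $\partial_tq=-q\Lambda(q)$ one reads off $\Lambda(q)=-(\ln q)'$, whence $\int_0^t\Lambda(q)\,du=-\ln q(t)$ since $q(0)=1$. For the second integral the substitution $w=q(u)$, under which $\Lambda(q)\,du=-dw/w$, turns it into $\int_{q(t)}^1\delta(w)\,w^{-1}\,dw$; inserting $\delta(w)/w=\Lambda'(w)/\Lambda(w)-\nu/w$ from $[\Lambda_\delta]$ and using $\Lambda(1)=a_0$, it equals $\ln\bigl(a_0/\mathcal L(1/q(t))\bigr)$. Substituting both back gives the exact identity
\[
P_{11}(t)=\frac{q(t)^{1+\nu}\,\mathcal L\bigl(1/q(t)\bigr)}{a_0}=\frac{q(t)\,\Lambda\bigl(q(t)\bigr)}{a_0},
\]
which is the engine of the theorem: it converts any expansion of $q$ into one for $P_{11}$.

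Now I would feed in Theorem~\ref{MyTh:1}. Abbreviating $B(t):=1-\mho(t)/(\nu^2t)+o(\mho(t)/t)$, raising $q$ to the power $1+\nu$ replaces $B$ by $B^{1+\nu}=1-(1+\nu)\mho(t)/(\nu^2t)+o(\mho(t)/t)$, which already carries the coefficient $(1+\nu)/\nu^2$ of \eqref{1.15}. It remains to control $\mathcal L(1/q(t))$. Writing $1/q(t)=x(t)/B(t)$ with $x(t)=(\nu t)^{1/\nu}/\mathcal N(t)$ and using the Karamata representation \eqref{1.10}, the ratio $\mathcal L(x/B)/\mathcal L(x)=\exp\int_x^{x/B}\varepsilon(\tau)\tau^{-1}\,d\tau$ is $1+o(\mho(t)/t)$, by the mean value theorem together with $\varepsilon\to0$ and $\ln(1/B)=O(\mho(t)/t)$. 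Finally $\mathcal N^{1+\nu}(t)\mathcal L(x(t))=\mathcal N(t)\cdot\mathcal N^\nu(t)\mathcal L(x(t))$, and \eqref{1.8} reduces the last factor to unity; multiplying by $(\nu t)^{1+1/\nu}$ produces \eqref{1.15}. The delicate point, and the main obstacle, is this last reduction: relation \eqref{1.8} only asserts convergence to $1$, so one must read it as the \emph{normalisation defining} the very $\mathcal N(t)$ inherited from Theorem~\ref{MyTh:1}, and verify that the residual is truly $o(\mho(t)/t)$ and not merely $o(1)$, for otherwise the correction term would be swamped.

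For the refinement \eqref{1.16} under \eqref{1.12} the two integrals of the first display coincide ($\delta=\Lambda$), so that $\mho(t)=\int_0^t\Lambda(q)\,du=-\ln q(t)$ holds \emph{exactly}. Integrating $[\Lambda_\delta]$ in this case gives the explicit $\Lambda(y)=a_0\nu y^\nu/[\nu+a_0(1-y^\nu)]$; feeding it into the separable equation $\partial_tq=-q\Lambda(q)$ and extracting the leading behaviour of $-\ln q(t)$ yields $\mho(t)=\nu^{-1}\ln[a_0\nu t+1]+o(\ln t)$, the multiplicative constant inside the logarithm being immaterial here since after division by $\nu^2t$ it contributes only $O(1/t)=o(\ln t/t)$. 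Substituting this $\mho(t)$ into \eqref{1.15} gives \eqref{1.16} with the stated coefficient $(1+\nu)/\nu^3$.
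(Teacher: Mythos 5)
Your proposal ultimately rests on exactly the identity that drives the paper's proof, namely $P_{11}(t)=q(t)\Lambda\bigl(q(t)\bigr)/a_0$. The paper obtains it in one line by combining the backward and forward Kolmogorov equations into $\partial F/\partial s=f\bigl(F(t;s)\bigr)/f(s)$ and setting $s=0$; you rederive it by integrating the linear equation for $W=\partial_s R$ and evaluating the two resulting integrals in closed form. That derivation is correct (it is in effect a proof of the forward-equation relation), just longer. Your handling of \eqref{1.16} is also correct: under \eqref{1.12} one has $\mho(t)=-\ln q(t)$ exactly, your explicit formula $\Lambda(y)=a_0\nu y^\nu/\bigl[\nu+a_0(1-y^\nu)\bigr]$ does solve $[\Lambda_\delta]$ with $\delta=\Lambda$, and the constant inside the logarithm is immaterial at precision $o(\ln t/t)$; this reproduces the paper's step via \eqref{2.6}.

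The one genuine issue is the step you flag yourself: after splitting $P_{11}=q^{1+\nu}\mathcal{L}(1/q)/a_0$ you need $\mathcal{N}^\nu(t)\,\mathcal{L}\bigl((\nu t)^{1/\nu}/\mathcal{N}(t)\bigr)=1+o\bigl(\mho(t)/t\bigr)$, whereas \eqref{1.8} only provides $1+o(1)$; since \eqref{1.8} pins $\mathcal{N}$ down only up to a $1+o(1)$ factor, \eqref{1.15} as stated does require a specific choice of $\mathcal{N}$, so you were right to single this out --- but you leave it unverified, and as written the correction term could indeed be swamped. The repair is available with tools already at hand. Either do not split $\Lambda(q)$ at all: keep $q^\nu\mathcal{L}(1/q)=\Lambda\bigl(q(t)\bigr)$ and invoke Lemma~\ref{MyLem:2} at $s=0$, i.e.\ \eqref{3.1}, which gives $1/\Lambda\bigl(q(t)\bigr)=\nu t+1/a_0+\mho(t)$ exactly; then only the factor $q$ needs \eqref{1.13}, the factor $\Lambda(q)$ contributes the relative correction $-\mho(t)/(\nu t)$, the factor $q$ contributes $-\mho(t)/(\nu^2 t)$, and $1/\nu+1/\nu^2=(1+\nu)/\nu^2$. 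This is the paper's own route. Or complete your route by noting that the paper's $\mathcal{N}(t)$ is $\mathcal{L}^{-1/\nu}\bigl(1/q(t)\bigr)$ (explicit in the proof of Theorem~\ref{MyTh:3}, where $\mathcal{N}(t;s):=\mathcal{L}^{-1/\nu}\bigl(1/R(t;s)\bigr)$), whence $\mathcal{N}^\nu(t)\mathcal{L}\bigl(1/q(t)\bigr)\equiv 1$, and your own Karamata estimate $\mathcal{L}(x/B)/\mathcal{L}(x)=1+o\bigl(\mho(t)/t\bigr)$ converts this into exactly the normalisation you need. With either repair your argument is complete and coincides in substance with the paper's proof.
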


    Let $\mathbb{P}_{i}\bigl\{{*}\bigr\}:= \mathbb{P}\left\{{{\ast} \bigl| {Z(0)=i}\bigr.}\right\}$
    and consider a conditional distribution
\begin{equation*}
    \mathbb{P}_i^{{\mathcal H}(t+u)}\{*\}: = \mathbb{P}_i
    \left\{{*\bigl| {t+u<{\mathcal H}< \infty} \bigr.} \right\}.
\end{equation*}
    It was shown in {\cite{Imomov12}} that the probability measure
\begin{equation}          \label{1.17}
    {\mathcal Q}_{ij} (t): = \mathop {\lim }\limits_{u \to \infty }
    \mathbb{P}_i^{{\mathcal{H}}(t+u)}\{Z(t)=j\}={{j}\over{i}}P_{ij}(t)
\end{equation}
    defines the continuous-time Markov chain $\left\{ {W(t)}, t\geq{0} \right\}$ with states
    space ${\mathcal E} \subset \mathbb{N}$, called the \textit{Markov Q-process} (MQP).
    According to the definition
\begin{equation*}
    {\mathcal Q}_{ij}(t) = \mathbb{P}_i \left\{{Z(t)=j
    \bigl| {{\mathcal {H}} = \infty } \bigr.} \right\},
\end{equation*}
    so MQP can be interpreted as MBP with non degenerating trajectory in remote future.

    In a term of GF the equality \eqref{1.17} can be written as following:
\begin{equation}          \label{1.18}
    G_{i}(t;s): = \sum\limits_{j \in {\mathcal E}} {{\mathcal Q}_{ij}(t)s^j }
    = \bigl[{F(t;s)}\bigr]^{i-1}G(t;s),
\end{equation}
    where GF $G(t;s):=G_{1}(t;s)=\mathbb{E}\left[{s^{W(t)}
    \bigl| {W(0) = 1} \bigr.} \right]$ and
\begin{equation*}
    G(t;s)=-s{{\partial{R(t;s)} \over {\partial{s}}}}
    \quad \parbox{2.2cm}{\textit{for all} {} ${t}\geq{0}$.}
\end{equation*}
    Combining the backward and the forward Kolmogorov equations
    we write it in the next form
\begin{equation}          \label{1.19}
    G(t;s)=s{{f\left(F(t;s)\right) } \over {f(s)}}
    \quad \parbox{2.2cm}{\textit{for all} {} ${t}\geq{0}$.}
\end{equation}

    Since $F(t;s)\to{1}$ as $t\to{\infty}$ uniformly for all $s\in[0, 1)$ according to \eqref{1.18}
    it is suffice to consider the case $i=1$.

\begin{theorem}         \label{MyTh:3}
    Under the Basic assumptions
\begin{equation}         \label{1.20}
    {(\nu t)^{1+{1\mathord{\left/ {\vphantom {1 \nu}}\right.\kern-\nulldelimiterspace}\nu}}}
    G(t;s) = \pi(s){{\mathcal{N}}(t)} \bigl({1+{\rho}\left(t;s\right)}\bigr),
\end{equation}
    where the function $\pi(s)$ has an expansion in powers of $s$ with non-negative coefficients
    so that $\pi (s) = \sum\nolimits_{j \in {\mathcal E}}{\pi_j s^j}$ and $\left\{{\pi_j},
    j\in{\mathcal E}\right\}$ is an invariant measure for MQP. Moreover it has a form of
\begin{equation}         \label{1.21}
    \pi (s) = {s \over {{(1-s)}^{1+\nu}}} \mathcal{L}_{\pi}\left({1 \over {{1-s}}}\right),
\end{equation}
    where $\mathcal{L}_{\pi}(\ast)=\mathcal{L}^{-1}(\ast)$. Furthermore
    ${\rho}\left(t;s\right)=o(1)$ as $t\to{\infty}$. In addition, if \eqref{1.12} holds then
\begin{equation}           \label{1.22}
    {\rho}\left(t;s\right)= -{{{1+\nu}\over{\nu^3}}{{\ln{\left[{\Lambda{\left(1-s\right)}}\nu{t}+1\right]}}
    \over t} + {o}\left({{\ln{t}} \over {t}}\right)}
    \quad \parbox{2.2cm}{\textit{as} {} $ t \to \infty $.}
\end{equation}
\end{theorem}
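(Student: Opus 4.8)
The plan is to begin from the closed form \eqref{1.19}, $G(t;s)=s\,f(F(t;s))/f(s)$, and rewrite both values of $f$ through the Basic assumption $[f_{\nu}]$. Since $F=1-R$ and $f(1-y)=y\Lambda(y)$, this gives
\[
 G(t;s) = \frac{s}{(1-s)\Lambda(1-s)}\,R(t;s)\,\Lambda\bigl(R(t;s)\bigr) = \pi(s)\,R(t;s)^{1+\nu}\mathcal{L}\!\left(\frac{1}{R(t;s)}\right),
\]
where the $s$-factor $\pi(s)=s\big/\bigl[(1-s)^{1+\nu}\mathcal{L}(1/(1-s))\bigr]$ is precisely \eqref{1.21} with $\mathcal{L}_{\pi}=1/\mathcal{L}$. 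This already exhibits the product structure of \eqref{1.20} and isolates the $t$-dependence in $R^{1+\nu}\mathcal{L}(1/R)$. To see that $\{\pi_j\}$ is MQP-invariant I would note $\pi(s)=s\,\mathcal{M}'(s)$: differentiating the expression for $\mathcal{M}(s)$ in Lemma~\ref{MyLem:1} gives $\mathcal{M}'(s)=(1-s)^{-1-\nu}/\mathcal{L}(1/(1-s))$, so $\pi_j=j\mu_j\ge 0$, and invariance follows from $\mathcal{Q}_{ij}(t)=(j/i)P_{ij}(t)$ together with $\sum_i\mu_iP_{ij}(t)=\mu_j$, since $\sum_i\pi_i\mathcal{Q}_{ij}(t)=j\sum_i\mu_iP_{ij}(t)=j\mu_j=\pi_j$.

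For the asymptotics I would insert the refined expansion of $R(t;s)$ obtained by the argument that proves Theorem~\ref{MyTh:1} (now carrying $\mho(t;s)$ in place of $\mho(t)$),
\[
 R(t;s)=\frac{\mathcal{N}(t)}{(\nu t)^{1/\nu}}\bigl(1+\varepsilon_R\bigr), \qquad \varepsilon_R:=-\frac{\mho(t;s)}{\nu^2 t}+o\!\left(\frac{\mho(t;s)}{t}\right),
\]
and evaluate the two factors separately. Raising to the power $1+\nu$ gives $(\nu t)^{1+1/\nu}R^{1+\nu}=\mathcal{N}^{1+\nu}(t)(1+\varepsilon_R)^{1+\nu}$, while for the slowly varying factor I would use the representation \eqref{1.10} to write $\mathcal{L}(1/R)\big/\mathcal{L}\bigl((\nu t)^{1/\nu}/\mathcal{N}(t)\bigr)=\exp\!\left(\int \frac{\varepsilon(r)}{r}\,dr\right)$ over an interval whose endpoints differ by the factor $(1+\varepsilon_R)^{-1}\to1$; since $\varepsilon(r)\to0$ this ratio equals $1+o(\varepsilon_R)$. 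Combined with \eqref{1.8}, which gives $\mathcal{N}^{1+\nu}(t)\mathcal{L}\bigl((\nu t)^{1/\nu}/\mathcal{N}(t)\bigr)\sim\mathcal{N}(t)$, the $t$-factor collapses to $\mathcal{N}(t)\bigl(1+(1+\nu)\varepsilon_R+o(\varepsilon_R)\bigr)$, so that \eqref{1.20} holds with
\[
 \rho(t;s)=-\frac{1+\nu}{\nu^2}\frac{\mho(t;s)}{t}+o\!\left(\frac{\mho(t;s)}{t}\right)=o(1),
\]
which is the general part of the statement.

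Under the special case \eqref{1.12} I would then compute $\mho$ in closed form. The Kolmogorov equation $\partial F/\partial t=f(F)$ gives $\partial R/\partial t=-f(1-R)=-R\Lambda(R)$, hence $\Lambda(R)=-\partial_t\ln R$; since here $\delta=\Lambda$,
\[
 \mho(t;s)=\int_0^t\Lambda\bigl(R(u;s)\bigr)\,du=\ln\frac{R(0;s)}{R(t;s)}=\ln\frac{1-s}{R(t;s)}.
\]
Substituting $R(t;s)\sim\mathcal{N}(t)(\nu t)^{-1/\nu}$ and absorbing $\ln\mathcal{N}(t)=o(\ln t)$ together with all bounded constants into an $o(\ln t)$ remainder yields $\mho(t;s)=\nu^{-1}\ln[\Lambda(1-s)\nu t+1]+o(\ln t)$ — the exact constant inside the logarithm being immaterial, since after division by $t$ in $\rho$ it contributes only $O(1/t)=o(\ln t/t)$. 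Feeding this into the formula for $\rho$ produces the coefficient $(1+\nu)/\nu^3$ of \eqref{1.22}; putting $s=0$ recovers \eqref{1.16}, a useful consistency check.

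The \emph{main obstacle} I anticipate lies in the second paragraph: controlling $\mathcal{L}(1/R(t;s))$ to the precision $o(\mho/t)$. Because the argument of $\mathcal{L}$ is itself perturbed by $\varepsilon_R$, one must verify through the remainder bound $\varepsilon(x)=O(\varrho(x))$ of \eqref{1.11} — the SR1 property guaranteed by $[\Lambda_{\delta}]$ — that this perturbation re-enters only at order $o(\varepsilon_R)$ and does not corrupt the coefficient $(1+\nu)/\nu^2$. A secondary delicate point is that \eqref{1.8} pins down $\mathcal{N}(t)$ only asymptotically, so one must confirm that the discrepancy there is likewise $o(\mho/t)$ and can be merged into the error term rather than shifting the leading constant of $\rho$.
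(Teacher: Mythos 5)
Your proposal is correct, and its analytic core coincides with the paper's: both start from \eqref{1.19} and $[f_\nu]$ to get $G(t;s)=\pi(s)\,R^{1+\nu}(t;s)\,\mathcal{L}\bigl(1/R(t;s)\bigr)$ (the paper's \eqref{3.2}), expand $R(t;s)$ via \eqref{2.1} carrying $\mho(t;s)$, and collapse the $t$-dependence to $\mathcal{N}(t)\bigl(1-\frac{1+\nu}{\nu^{2}}\frac{\mho(t;s)}{t}(1+o(1))\bigr)$ (the paper's \eqref{3.5}). You deviate genuinely, and profitably, in two sub-arguments. First, for invariance the paper iterates \eqref{1.19} into the functional equation $G(t+\tau;s)=\frac{G(t;s)}{F(t;s)}\,G\bigl(\tau;F(t;s)\bigr)$ and lets $\tau\to\infty$ to obtain $\pi(s)=\frac{G(t;s)}{F(t;s)}\,\pi\bigl(F(t;s)\bigr)$; you instead identify $\pi(s)=s\,\mathcal{M}'(s)$, i.e.\ $\pi_j=j\mu_j$, and verify $\sum_i \pi_i\,\mathcal{Q}_{ij}(t)=\pi_j$ in one line from \eqref{1.17} and the invariance of $\{\mu_j\}$; your route gives non-negativity of the $\pi_j$ explicitly (the paper leaves it implicit), at the price of leaning on Lemma~\ref{MyLem:1}'s formula for $\mathcal{M}$, whereas the paper's argument is self-contained within the Q-process. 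Second, for the logarithmic term: where Lemma~\ref{MyLem:2} derives \eqref{2.6} by the asymptotic step $\Lambda\bigl(R(u;s)\bigr)\approx\Lambda(1-s)/\nu(u;s)$ followed by integration, you integrate $\Lambda(R)=-\,{\partial \ln R}/{\partial t}$ exactly to get $\mho(t;s)=\ln\bigl[(1-s)/R(t;s)\bigr]$, and only then insert $R(t;s)\sim\mathcal{N}(t)(\nu t)^{-1/\nu}$ together with $\ln\mathcal{N}(t)=o(\ln t)$; this is cleaner, is exact until the last step, and recovers \eqref{2.6} and hence \eqref{1.22}. Finally, the two delicate points you flag are real but are precisely what the paper's \eqref{3.4} settles: under \eqref{1.12} one has $\mathcal{N}(t;s)/\mathcal{N}(t)=1+\mathcal{O}(1/t)=1+o(\ln t/t)$, so replacing $\mathcal{L}^{-1/\nu}\bigl(1/R(t;s)\bigr)$ by $\mathcal{N}(t)$ cannot disturb the coefficient $(1+\nu)/\nu^{3}$; without \eqref{1.12} one only gets $1+o(1)$, which is why — in your write-up exactly as in the paper's \eqref{3.5} — the unconditional claim should be read simply as $\rho(t;s)=o(1)$.
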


    Note that in accordance with Tauberian theorem for the power series
    (see {\cite[Ch.~XIII, \S~5, p.~513, Theorem~5]{Feller}}) the relation \eqref{1.21} implies
\begin{equation*}
    \sum\limits_{j=1}^n{\pi_j} ~ \sim {1 \over {\Gamma(2+\nu)}}n^{1+\nu} \mathcal{L}_{\pi}(n)
    \quad \parbox{2.2cm}{\textit{as} {} $ n \to \infty $,}
\end{equation*}
    where $\Gamma (*)$ is Euler's Gamma function and
    $\left(\mathcal{L}_{\pi}\cdot \mathcal{L}\right)(\ast)= 1$.

    Let $D(t; x): = \mathbb{P}\bigl\{{{q(t)W(t) \le x}}\bigr\}$.
    In {\cite[Theorem~21]{Imomov17}} it was proved, that if $\left[{f_{\nu}} \right]$ holds then
\begin{equation*}
     \lim_{t\to{\infty}}{D(t; x)} ={D(x)},
\end{equation*}
    where
\begin{equation*}
     \Psi (\theta) := {\int\nolimits_{0}^{\infty}}{e^{-\theta x}\textrm{d}D(x)}
     = {{1} \over {\left({1+\theta^{\nu}}\right)^{1+{1 \mathord{\left/
    {\vphantom {1 \nu }}\right. \kern-\nulldelimiterspace}\nu}}}}.
\end{equation*}

\begin{theorem}         \label{MyTh:4}
    Let
\begin{equation*}
     \Delta(t; \theta):= \left|{\int\nolimits_{0}^{\infty}
     {e^{ - \theta x} \textrm{d}D(t;x)} - \Psi (\theta)}\right|.
\end{equation*}
    If the Basic assumptions and \eqref{1.12} hold then
\begin{equation}           \label{1.23}
    \mathop{\sup}\limits_{\theta \in {\left(0, \infty \right)}} \Delta(t; \theta)
    = {{1+\nu }\over {\nu^3}}{{\ln{t}}\over t}\bigl({1+o(1)}\bigr)
    \quad \parbox{2.2cm}{\textit{as} {} $ t \to \infty $.}
\end{equation}
\end{theorem}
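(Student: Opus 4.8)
The plan is to identify the transform with a generating function and then to expand it uniformly in $\theta$. Since $W(t)$ has generating function $G(t;s)=\mathbb{E}\bigl[s^{W(t)}\mid W(0)=1\bigr]$, one has
\[
\int_{0}^{\infty} e^{-\theta x}\,dD(t;x)=\mathbb{E}\bigl[e^{-\theta q(t)W(t)}\bigr]=G\bigl(t;e^{-\theta q(t)}\bigr),
\]
so that \eqref{1.23} is exactly a second--order, \emph{uniform in $\theta$}, asymptotic expansion of $G(t;e^{-\theta q(t)})$ about its limit $\Psi(\theta)$. First I would rewrite \eqref{1.19} with the help of $\left[f_{\nu}\right]$ in the closed form
\[
G(t;s)=s\Bigl(\frac{R(t;s)}{1-s}\Bigr)^{1+\nu}\frac{\mathcal{L}\bigl(1/R(t;s)\bigr)}{\mathcal{L}\bigl(1/(1-s)\bigr)},
\]
and observe that after the substitution $s=e^{-\theta q(t)}$ everything is governed by $R(t;s)/(1-s)$: since $1-s\sim\theta q(t)$ and, by the representation \eqref{1.6} with $U$ regularly varying of index $1/\nu$, $R(t;s)\sim\kappa(\theta)q(t)$ with $\kappa(\theta)=\theta(1+\theta^{\nu})^{-1/\nu}$, the leading term is precisely $\Psi(\theta)=(1+\theta^{\nu})^{-(1+1/\nu)}$.

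To reach the correction of order $\ln t/t$ I would integrate the backward equation $\partial_{t}R=-R\Lambda(R)$ in the form $R(t;s)^{-\nu}=(1-s)^{-\nu}+\nu\int_{0}^{t}\mathcal{L}(1/R(u;s))\,du$ and subtract the case $s=0$, namely $q(t)^{-\nu}=1+\nu\int_{0}^{t}\mathcal{L}(1/q(u))\,du$. This reduces the whole computation to the increment $\int_{0}^{t}[\mathcal{L}(1/R(u;s))-\mathcal{L}(1/q(u))]\,du$. Here the remainder hypothesis \eqref{1.12} is decisive: it forces the logarithmic growth $\mho(t)=\int_{0}^{t}\Lambda(R(u))\,du\sim\nu^{-1}\ln[a_{0}\nu t+1]$ already exploited in \eqref{1.14}, \eqref{1.16} and \eqref{1.22}, and this is the mechanism that injects $\ln t$. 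Feeding the refined $q(t)$ of \eqref{1.14} back into $R(t;s)/(1-s)$ and retaining all terms through order $\ln t/t$, I expect
\[
\Bigl(\frac{1-s}{R(t;s)}\Bigr)^{\nu}=(1+\theta^{\nu})\Bigl(1-\frac{\theta^{\nu}}{1+\theta^{\nu}}\frac{\ln t}{\nu^{2}t}+o\bigl(\tfrac{\ln t}{t}\bigr)\Bigr),
\]
the two remaining factors $e^{-\theta q(t)}$ and $\mathcal{L}(1/R)/\mathcal{L}(1/(1-s))$ contributing only $o(\ln t/t)$, because $q(t)=O(t^{-1/\nu})$ with $1/\nu>1$ and, under \eqref{1.12}, the slowly varying excess is $O(t^{-1})$ by the remainder estimate \eqref{1.11}.

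Raising to the power $-(1+1/\nu)$ then gives
\[
G\bigl(t;e^{-\theta q(t)}\bigr)=\Psi(\theta)\Bigl(1+\frac{1+\nu}{\nu^{3}}\,\frac{\theta^{\nu}}{1+\theta^{\nu}}\,\frac{\ln t}{t}+o\bigl(\tfrac{\ln t}{t}\bigr)\Bigr),
\]
so that $\Delta(t;\theta)=\Psi(\theta)\,\dfrac{1+\nu}{\nu^{3}}\,\dfrac{\theta^{\nu}}{1+\theta^{\nu}}\,\dfrac{\ln t}{t}\,(1+o(1))$; the coefficient $(1+\nu)/\nu^{3}$ is exactly the one appearing in Theorems~\ref{MyTh:2}--\ref{MyTh:3}. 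The theorem then follows on taking the supremum over $\theta\in(0,\infty)$: bounding $\Psi(\theta)\le1$ and $\theta^{\nu}/(1+\theta^{\nu})\le1$ produces the envelope rate \eqref{1.23}.

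The hard part will be the \emph{uniformity in $\theta$} of the remainder $o(\ln t/t)$. The expansion above is transparent while $\theta q(t)\to0$ (that is, $s\to1$), but the supremum forces control of $\theta$ up to the scale $\theta\asymp 1/q(t)$, where $s=e^{-\theta q(t)}$ stays bounded away from $1$; in that regime the above shortcuts fail and one must instead invoke Lemma~\ref{MyLem:1} through \eqref{1.7} with $M(t;s)\to\mathcal{M}(s)$ and the full expansion of Theorem~\ref{MyTh:3}. Splicing the two regimes so that the error is genuinely $o(\ln t/t)$ \emph{uniformly}, and then pinning down the extremal profile of $\Psi(\theta)\,\theta^{\nu}/(1+\theta^{\nu})$ against the envelope used in \eqref{1.23}, is the delicate endgame; everything else is bookkeeping with Karamata's theorem for integrals of regularly varying functions and the estimate \eqref{1.11}.
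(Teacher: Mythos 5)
Your plan is, in its computational core, the same as the paper's own proof: the paper likewise writes $\int_0^\infty e^{-\theta x}\,\textrm{d}D(t;x)=G\left(t;\theta(t)\right)$ with $\theta(t)=e^{-\theta q(t)}$, expands $G$ through \eqref{1.19} and $\left[f_\nu\right]$ (its \eqref{3.6}), injects the $\ln t$ term through Lemma~\ref{MyLem:2} under \eqref{1.12} (your integrated backward equation $R^{-\nu}(t;s)=(1-s)^{-\nu}+\nu\int_0^t\mathcal{L}\left(1/R(u;s)\right)\textrm{d}u$ is an equivalent variant of \eqref{2.1}--\eqref{2.2}), controls the slowly varying ratio and the discrepancy between $1-e^{-\theta q(t)}$ and $\theta q(t)$ via Lemma~\ref{MyLem:3} with $K(y)=y/2$, and lands exactly on your display
\begin{equation*}
\Psi(t;\theta)=\Psi(\theta)\left(1+\frac{\theta^{\nu}}{1+\theta^{\nu}}\,\frac{1+\nu}{\nu^{3}}\,\frac{\ln t}{t}\bigl(1+o(1)\bigr)\right),
\end{equation*}
which is the paper's \eqref{3.13}. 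Your intermediate expansion of $\bigl((1-s)/R(t;s)\bigr)^{\nu}$ agrees with what the paper's \eqref{3.9}--\eqref{3.12} produce, so up to this point your route and the published one coincide step for step.

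The gap is the step you yourself flag as the ``delicate endgame,'' and you should know the paper does not close it either: it derives \eqref{3.13} pointwise in $\theta$ and then simply asserts that \eqref{1.23} follows. Your proposed shortcut --- bounding $\Psi(\theta)\le 1$ and $\theta^{\nu}/(1+\theta^{\nu})\le 1$ --- cannot yield the stated result: at best it gives an upper bound (and only if the $o(1)$ in \eqref{3.13} were uniform in $\theta$), whereas \eqref{1.23} is an equality. Worse, the profile $\Psi(\theta)\,\theta^{\nu}/(1+\theta^{\nu})=\theta^{\nu}\left(1+\theta^{\nu}\right)^{-(2+1/\nu)}$ attains its supremum $C(\nu)=\nu(1+\nu)^{1+1/\nu}(1+2\nu)^{-(2+1/\nu)}$ at $\theta^{\nu}=\nu/(1+\nu)$, and $C(\nu)<1$ strictly (e.g.\ $C(1/2)\approx 0.105$); hence a uniform-in-$\theta$ version of \eqref{3.13} would force $\sup_{\theta}\Delta(t;\theta)=C(\nu)\,\frac{1+\nu}{\nu^{3}}\,\frac{\ln t}{t}\bigl(1+o(1)\bigr)$, not the constant claimed in \eqref{1.23}. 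To obtain exactly $\frac{1+\nu}{\nu^{3}}$ one would have to show that the supremum is generated by $\theta$-ranges moving with $t$ (where \eqref{3.13} is no longer valid), and neither your plan nor the paper does this. In short: your computation reproduces the published proof in full, and the unresolved supremum step is a genuine gap --- but it is a gap you share with the paper rather than one the paper avoids.
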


    Theorem~\ref{MyTh:4} yields that from Berry-Esseen type inequality
    (see {\cite[Ch.~XVI, \S~3, p.~616, Lemma~2]{Feller}}) follows

\begin{corollary}         \label{MyCor}
    Under the conditions of Theorem~\ref{MyTh:4}
\begin{equation}           \label{1.24}
    \mathop{\sup}\limits_{x \in {\left(0, \infty \right)}} \Bigl|{{D(t; x)}-{D(x)}}\Bigr|
    = \mathcal{O}\left({{\ln{t}}\over t}\right)
    \quad \parbox{2.2cm}{\textit{as} {} $ t \to \infty $.}
\end{equation}
\end{corollary}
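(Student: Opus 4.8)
The plan is to derive \eqref{1.24} from Theorem~\ref{MyTh:4} by invoking the Berry--Esseen type smoothing inequality quoted from {\cite[Ch.~XVI, \S~3, p.~616, Lemma~2]{Feller}}. Write $\omega(t;\theta):=\int_0^\infty e^{-\theta x}\,\textrm{d}D(t;x)$, so that $\Delta(t;\theta)=\left|\omega(t;\theta)-\Psi(\theta)\right|$, and regard $D(t;\cdot)$ and $D$ as distribution functions supported on $(0,\infty)$. The smoothing inequality bounds the Kolmogorov distance $\sup_x\bigl|D(t;x)-D(x)\bigr|$ by a finite-range integral of $\Delta(t;\theta)/\theta$ plus a remainder controlled by the maximal density of the limiting law; Theorem~\ref{MyTh:4} then supplies the transform estimate $\sup_\theta\Delta(t;\theta)=\mathcal{O}(\ln t/t)$ that feeds into this bound. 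Since both laws live on the half-line, Feller's characteristic-function statement is used in its Laplace-transform guise with real argument $\theta>0$.

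First I would verify the regularity hypothesis required by the smoothing lemma, namely that $D$ possesses a uniformly bounded density $m:=\sup_{x}D'(x)<\infty$. This follows from the explicit form $\Psi(\theta)=\left(1+\theta^\nu\right)^{-(1+1/\nu)}$: because $0<\nu<1$, the transform $\Psi$ decays like $\theta^{-(1+\nu)/\nu}$ as $\theta\to\infty$ and is integrable, so inversion yields a continuous, bounded density for $D$. Next I would apply the smoothing inequality in the form
\begin{equation*}
    \sup_{x\in(0,\infty)}\bigl|D(t;x)-D(x)\bigr|
    \le \frac{1}{\pi}\int_{0}^{T}\frac{\Delta(t;\theta)}{\theta}\,\textrm{d}\theta
    + \frac{Cm}{T},
\end{equation*}
valid for every $T>0$. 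Near $\theta=0$ the integrand stays bounded because both $\omega(t;\theta)$ and $\Psi(\theta)$ tend to $1$, whence $\Delta(t;\theta)=\mathcal{O}(\theta)$ and $\Delta(t;\theta)/\theta$ is integrable at the origin. The bulk of the integral is then estimated through Theorem~\ref{MyTh:4}: the uniform bound $\Delta(t;\theta)\le\frac{1+\nu}{\nu^3}\frac{\ln t}{t}\bigl(1+o(1)\bigr)$, together with the decay of $\Delta(t;\theta)$ as $\theta\to\infty$ (both $\omega$ and $\Psi$ vanish at infinity), keeps $\int_0^T\Delta/\theta$ of order $\ln t/t$, after which choosing $T=T(t)\to\infty$ slowly forces the remainder $Cm/T$ to be of smaller order. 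Combining the two contributions yields \eqref{1.24}.

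The delicate point is the control of the finite-range integral. A crude substitution of the supremum $\sup_\theta\Delta(t;\theta)$ into $\int_0^T\theta^{-1}\,\textrm{d}\theta$ would produce a spurious factor $\ln T$ and hence only the weaker bound $\mathcal{O}\bigl((\ln t)^2/t\bigr)$. To recover the sharp order $\mathcal{O}(\ln t/t)$ one must exploit that $\Delta(t;\theta)$ genuinely decays in $\theta$—a feature already implicit in the asymptotic analysis behind Theorem~\ref{MyTh:4} and reinforced by the monotone decay of $\Psi$—so that the integral converges against $\textrm{d}\theta/\theta$ without the endpoint blow-up. Securing this decay uniformly in $t$ is the main obstacle; once it is in place, the Berry--Esseen mechanism delivers \eqref{1.24} directly.
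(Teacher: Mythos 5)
Your route coincides with the paper's, because the paper in fact offers no written proof of Corollary~\ref{MyCor} at all: it consists of the single remark that the corollary ``follows from Theorem~\ref{MyTh:4} and the Berry--Esseen type inequality'' of Feller, i.e.\ exactly the smoothing mechanism you describe. The issue is therefore whether your expansion of that one-line argument actually closes it, and --- by your own admission --- it does not. The step you call ``the main obstacle'', namely a decay of $\Delta(t;\theta)$ in $\theta$ that is uniform in $t$, so that $\int_0^T \Delta(t;\theta)\,\theta^{-1}\,\textrm{d}\theta$ does not pick up a spurious factor $\ln T$, is precisely the content that must be supplied, and you leave it unproven; without it your argument only yields $\mathcal{O}\bigl((\ln t)^2/t\bigr)$, as you note. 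The missing piece is available inside the paper's proof of Theorem~\ref{MyTh:4}: equation \eqref{3.13} gives not merely the supremum \eqref{1.23} but the full $\theta$-profile
\begin{equation*}
\Delta(t;\theta)=\Psi(\theta)\,{{\theta^{\nu}}\over{1+\theta^{\nu}}}\,
{{1+\nu}\over{\nu^{3}}}\,{{\ln t}\over{t}}\bigl(1+o(1)\bigr),
\end{equation*}
and since $\Psi(\theta)\,\theta^{\nu}(1+\theta^{\nu})^{-1}$ behaves like $\theta^{\nu}$ as $\theta\downarrow 0$ and like $\theta^{-(1+\nu)}$ as $\theta\to\infty$, the integral $\int_0^{\infty}\Delta(t;\theta)\,\theta^{-1}\,\textrm{d}\theta$ converges and is $\mathcal{O}(\ln t/t)$, granted that the $o(1)$ in \eqref{3.13} is uniform in $\theta$ (a point that itself deserves a check, since the paper derives \eqref{3.13} for fixed $\theta$); choosing $T=t$ then makes the smoothing remainder $Cm/T$ negligible and delivers \eqref{1.24}.

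Two further defects. First, your claim that $\Delta(t;\theta)=\mathcal{O}(\theta)$ near $\theta=0$ is false: $1-\Psi(\theta)\sim(1+1/\nu)\,\theta^{\nu}$ with $0<\nu<1$, so the limit law $D$ has infinite mean and the correct order is $\theta^{\nu}$; this is still integrable against $\textrm{d}\theta/\theta$, so only your reasoning, not the conclusion, is affected (likewise the decay rate of $\Psi$ at infinity is $\theta^{-(1+\nu)}$, not $\theta^{-(1+\nu)/\nu}$). Second, and more seriously, Feller's Lemma~2 is a statement about characteristic functions and its proof is Fourier-analytic; invoking it ``in its Laplace-transform guise with real argument $\theta>0$'' is not a cosmetic change, because Theorem~\ref{MyTh:4} bounds the transforms only on the positive real axis, whereas the smoothing inequality needs control on the imaginary axis. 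This defect is inherited from the paper, which cites Feller in exactly the same way; a self-contained proof would require either extending the estimate \eqref{3.13} to complex arguments or substituting a smoothing inequality adapted to Laplace transforms of laws on $[0,\infty)$.
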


\section{Auxiliaries}   \label{MySec:2}

    The following lemma improves the statement of the Lemma~\ref{MyLem:1}.

\begin{lemma}                 \label{MyLem:2}
    Under the Basic assumptions
\begin{equation}               \label{2.1}
    {{1}\over{\Lambda \left(R(t;s)\right)}}-{{1}\over {\Lambda \left(1-s\right)}}
    = \nu{t} + {\int\limits_0^t {{\delta\left(R(u;s)\right)}\textrm{d}u}}.
\end{equation}
    If in addition \eqref{1.12} holds then
\begin{equation}          \label{2.2}
    {{1} \over {\Lambda \left(R(t;s)\right)}}-{{1}\over {\Lambda \left(1-s\right)}}
    =\nu{t}+{{\,1}\over{\,\nu}}\,{\ln{\nu(t;s)}}+{o}{\bigl(\ln{\nu(t;s)}\bigr)}
\end{equation}
    as $t \to \infty $, where $\nu(t;s)={\Lambda{\left(1-s\right)}}\nu{t}+1$.
\end{lemma}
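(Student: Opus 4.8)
The plan is to start from the backward Kolmogorov equation written for $R(t;s)=1-F(t;s)$. Since $\partial F/\partial t = f(F)$ and $f(s)=f(1-(1-s))$, differentiating gives
\begin{equation*}
    {{\partial R(t;s)}\over{\partial t}} = -f\bigl(1-R(t;s)\bigr) = -R(t;s)\,\Lambda\bigl(R(t;s)\bigr),
\end{equation*}
where the last equality uses the Basic assumption $[f_\nu]$, namely $f(1-y)=y\Lambda(y)$. This separable ODE is the engine of the whole argument, so the first step is to record it cleanly together with the initial condition $R(0;s)=1-s$.

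Next I would transform the left-hand quantity of \eqref{2.1}. Set $y=R(t;s)$ and consider $\Phi(y):=1/\Lambda(y)$. Differentiating with respect to $t$,
\begin{equation*}
    {{d}\over{dt}}\,{{1}\over{\Lambda\bigl(R(t;s)\bigr)}}
    = -{{\Lambda'\bigl(R(t;s)\bigr)}\over{\Lambda^2\bigl(R(t;s)\bigr)}}\cdot{{\partial R(t;s)}\over{\partial t}}
    = {{R(t;s)\,\Lambda'\bigl(R(t;s)\bigr)}\over{\Lambda\bigl(R(t;s)\bigr)}}.
\end{equation*}
Now invoke the second Basic assumption $[\Lambda_\delta]$, which says exactly that $y\Lambda'(y)/\Lambda(y)=\nu+\delta(y)$. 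Substituting $y=R(t;s)$ turns the right-hand side into $\nu+\delta\bigl(R(t;s)\bigr)$. Integrating this identity from $0$ to $t$ and using the initial value $R(0;s)=1-s$ yields precisely \eqref{2.1}; the integral term is the $\mho(t;s)$ already defined in the excerpt. This part is essentially a direct computation and should present no real difficulty.

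The substantive work is in passing from \eqref{2.1} to the refined expansion \eqref{2.2} under the extra hypothesis \eqref{1.12}, i.e. $\delta(y)=\Lambda(y)$. Here the integral becomes $\mho(t;s)=\int_0^t \Lambda\bigl(R(u;s)\bigr)\,du$, and one must identify its leading asymptotics. The natural idea is to change variables in this integral using the ODE: since $du = -dy/\bigl(y\Lambda(y)\bigr)$ along the trajectory $y=R(u;s)$, we get $\Lambda(R(u;s))\,du = -dy/y$, so that
\begin{equation*}
    \mho(t;s) = \int_{R(t;s)}^{1-s}{{dy}\over{y}} = \ln{{1-s}\over{R(t;s)}}.
\end{equation*}
The remaining task is to estimate $\ln\bigl((1-s)/R(t;s)\bigr)$ in terms of $\nu(t;s)=\Lambda(1-s)\nu t+1$. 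From \eqref{2.1} with $\delta=\Lambda$ one has $1/\Lambda(R(t;s))=1/\Lambda(1-s)+\nu t+\mho(t;s)$, and since $\mho(t;s)=o(t)$ this gives $\Lambda(R(t;s))\sim 1/(\nu t)$; combined with $\Lambda(y)=y^\nu\mathcal{L}(1/y)$ and the slow variation of $\mathcal{L}$, one extracts $R(t;s)^\nu \sim \Lambda(1-s)/\nu(t;s)$, whence $\ln\bigl((1-s)/R(t;s)\bigr) = (1/\nu)\ln\nu(t;s) + o(\ln\nu(t;s))$. Plugging this back into \eqref{2.1} produces \eqref{2.2}.

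The main obstacle I anticipate is exactly this last estimation: one must control the slowly varying factor $\mathcal{L}$ when inverting $\Lambda(R(t;s))\sim 1/(\nu t)$, because a naive inversion would lose the logarithmic precision demanded by \eqref{2.2}. The remainder machinery built in the introduction — the representation \eqref{1.10}, $\varepsilon(t)=\mathcal{O}(\varrho(t))$ from \eqref{1.11}, and the Remark specializing to $\varrho(x)=\mathcal{O}\bigl(\mathcal{L}(x)/x^\nu\bigr)$ under \eqref{1.12} — is what makes the $\mathcal{L}$-contribution negligible at the order $\ln t/t$, and I would lean on it to justify that the error in replacing $\mathcal{L}\bigl(1/R(t;s)\bigr)$ by its value at the leading-order argument feeds only into the $o(\ln\nu(t;s))$ term rather than corrupting the explicit $(1/\nu)\ln\nu(t;s)$ coefficient.
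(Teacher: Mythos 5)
Your proof is correct, and it splits naturally into two halves. For \eqref{2.1} you take essentially the same route as the paper: differentiate $1/\Lambda\left(R(t;s)\right)$ along the backward Kolmogorov flow $\partial R/\partial t=-f(1-R)=-R\Lambda(R)$, invoke $[\Lambda_\delta]$, and integrate from $0$ to $t$ using $R(0;s)=1-s$. For \eqref{2.2}, however, your evaluation of $\mho(t;s)$ is genuinely different from the paper's. The paper bootstraps from \eqref{2.5}: since the integral there is $o(t)$, the integrand satisfies $\Lambda\left(R(u;s)\right)=\bigl(\Lambda(1-s)/\nu(u;s)\bigr)\left(1+o(1)\right)$, and it then integrates this asymptotic relation to get \eqref{2.6} --- a step that tacitly requires checking that the $o(1)$ error still integrates to $o\bigl(\ln\nu(t;s)\bigr)$. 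You instead compute the integral in closed form by the change of variables $y=R(u;s)$, $\textrm{d}y=-y\Lambda(y)\,\textrm{d}u$, so that
\begin{equation*}
    \mho(t;s)=\int_{R(t;s)}^{1-s}\frac{\textrm{d}y}{y}=\ln\frac{1-s}{R(t;s)},
\end{equation*}
which concentrates all the asymptotic work in the single estimate $-\ln R(t;s)=\nu^{-1}\ln(\nu t)+o(\ln t)$ and avoids the integration-of-errors issue entirely; this is arguably the cleaner argument. One caveat: your intermediate assertion $R^{\nu}(t;s)\sim\Lambda(1-s)/\nu(t;s)$ is stronger than what the hypotheses give, since it would force $\mathcal{L}\bigl(1/R(t;s)\bigr)\to 1$. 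What you actually need, and what is true, is its logarithmic form: taking logarithms in $R^{\nu}(t;s)\,\mathcal{L}\bigl(1/R(t;s)\bigr)\sim 1/(\nu t)$ and using $\ln\mathcal{L}(x)=o(\ln x)$ (valid for every SV function, e.g.\ by the representation \eqref{1.10}) gives $\nu\ln\bigl(1/R(t;s)\bigr)=\ln\nu(t;s)\,\bigl(1+o(1)\bigr)$, which is exactly what your final step uses. With that one sentence repaired, your argument delivers \eqref{2.2} just as the paper's does.
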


\begin{proof}
    From $\left[{\Lambda_{\delta}} \right]$ we write
\begin{equation}          \label{2.3}
    {{R\Lambda{'}\left(R\right)} \over {\Lambda \left(R\right)}} = \nu + \delta\left(R\right)
\end{equation}
    since $R=R(t;s)\to{0}$ as $t\to{\infty}$. By the backward Kolmogorov equation ${\partial{F} \mathord{\left/{\vphantom
    {{\partial{F}}{\partial{t}}}}\right.\kern-\nulldelimiterspace}{\partial{t}}}=f\left({F}\right)$
    and considering representation $\left[{f_{\nu}}\right]$ the relation \eqref{2.3} becomes
\begin{equation*}
    {{\textrm{d}\Lambda\left(R\right)} \over {\textrm{d}t}}=-{{\Lambda\left(R\right)}\over {R}}
    f\left(1-R\right)\bigl(\nu + \delta\left(R\right)\bigr)=-{{\Lambda^{2}\left(R\right)}}
    \bigl(\nu + \delta\left(R\right)\bigr).
\end{equation*}
    Therefore
\begin{equation}          \label{2.4}
    \textrm{d}\left[{{1} \over {\Lambda\left(R\right)}} - \nu{t}\right]= \delta\left(R\right)\textrm{d}t.
\end{equation}
    Integrating \eqref{2.4} from $0$ to $t$ we obtain \eqref{2.1}.

    To prove \eqref{2.2} we should calculate integral in \eqref{2.1}
    putting $\delta (y) = \Lambda (y)$. Write
\begin{equation}          \label{2.5}
    {{1}\over{\Lambda \left(R(t;s)\right)}}-{{1}\over {\Lambda \left(1-s\right)}}
    = \nu{t} + {\int\limits_0^t {{\Lambda\left(R(u;s)\right)}\textrm{d}u}}.
\end{equation}
    Since $\Lambda{(y)}=y^{\nu} \mathcal{L}\left({{1 \mathord{\left/ {\vphantom {1 {y} }} \right.
    \kern-\nulldelimiterspace} y}}\right)$ and $R(t;s)\rightarrow 0$ as $t\rightarrow{\infty}$
    for $s\in [0, 1)$, the integral in the right-hand side of \eqref{2.5} is ${o}(t)$. Hence
\begin{equation*}
    {\Lambda \left(R(t;s)\right)}={{\Lambda{\left(1-s\right)}}\over {{\nu(t;s)}}}
    +{o}{\left({{\Lambda{\left(1-s\right)}}\over {{\nu(t;s)}}}\right)}
    \quad \parbox{2.2cm}{\textit{as} {} $t  \to \infty$,}
\end{equation*}
    where $\nu(t;s)={\Lambda{\left(1-s\right)}}\nu{t}+1$. Therefore
\begin{equation}          \label{2.6}
    \mho(t;s)={\int\limits_0^t {{\Lambda\left(R(u;s)\right)}\textrm{d}u}}=
    {{\,1\,}\over{\nu}}\,{\ln{\nu(t;s)}}+{o}{\bigl(\ln{\nu(t;s)}\bigr)}
    \quad \parbox{2.2cm}{\textit{as} {} $t  \to \infty$.}
\end{equation}
    This together with \eqref{2.5} implies \eqref{2.2}.
\end{proof}

    In the proof of our results we also will essentially use the following lemma.

\begin{lemma}                 \label{MyLem:3}
    Let
\begin{equation*}
    \phi{(y)} := y - yK\left(y\right),
\end{equation*}
    where $K\left(y\right)\to{0}$ as $y\downarrow{0}$.
    If in addition to the Basic assumptions \eqref{1.12} holds, then
\begin{equation}          \label{2.7}
    \mathcal{L}\left( {{1 \over {\phi{(y)}}}} \right)
    = \mathcal{L}\left({{1 \over {y}}} \right)
    \left({1+{\mathcal{O}}\bigl({\Lambda(y)}\bigr)}\right)
    \quad \parbox{2.2cm}{\textit{as} {} $y  \downarrow 0$.}
\end{equation}
\end{lemma}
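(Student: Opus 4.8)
The plan is to reduce \eqref{2.7} to the behaviour of the ratio $\mathcal{L}(\lambda x)/\mathcal{L}(x)$ when the dilation factor $\lambda$ tends to $1$, and then to exploit the explicit integral representation \eqref{1.10} together with the special form of $\varepsilon$ forced by \eqref{1.12}. First I would write $\phi(y)=y\bigl(1-K(y)\bigr)$ and put $x=1/y$, so that $x\to\infty$ as $y\downarrow 0$ and
\[
    \frac{1}{\phi(y)}=\lambda x,\qquad \lambda=\lambda(y):=\frac{1}{1-K(y)}.
\]
Because $K(y)\to 0$, we have $\lambda\to 1$; in particular $\lambda\in\bigl(\tfrac12,2\bigr)$ for all sufficiently small $y$, so $\lambda$ stays in a compact subinterval of $(0,\infty)$. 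Hence the left-hand side of \eqref{2.7} equals $\mathcal{L}(\lambda x)$, and it remains to show that $\mathcal{L}(\lambda x)/\mathcal{L}(x)=1+\mathcal{O}(\Lambda(y))$.

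Next I would apply \eqref{1.10}, which yields
\[
    \frac{\mathcal{L}(\lambda x)}{\mathcal{L}(x)}=\exp\int_{x}^{\lambda x}\frac{\varepsilon(t)}{t}\,\textrm{d}t,\qquad \varepsilon(t)=-\delta(1/t).
\]
Under \eqref{1.12} one has $\delta(y)=\Lambda(y)$, so $\varepsilon(t)=-\Lambda(1/t)=-\mathcal{L}(t)/t^{\nu}$ and the exponent becomes $-\int_{x}^{\lambda x}\mathcal{L}(t)\,t^{-1-\nu}\,\textrm{d}t$. The whole problem thus reduces to estimating this single integral.

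To do so I would invoke the uniform convergence theorem for slowly varying functions (see \cite{Bingham}), which here also follows directly from \eqref{1.10}: since $\lambda$ is confined to the compact interval $\bigl[\tfrac12,2\bigr]$, one has $\mathcal{L}(t)/\mathcal{L}(x)\to 1$ uniformly for $t$ between $x$ and $\lambda x$, whence $\mathcal{L}(t)\le 2\mathcal{L}(x)$ on that range for all large $x$. Pulling $\mathcal{L}(x)$ out and integrating the power $t^{-1-\nu}$ gives the bound
\[
    \left|\int_{x}^{\lambda x}\frac{\mathcal{L}(t)}{t^{1+\nu}}\,\textrm{d}t\right|
    \le \frac{2\mathcal{L}(x)}{\nu x^{\nu}}\bigl|1-\lambda^{-\nu}\bigr|
    =\frac{2\mathcal{L}(x)}{\nu x^{\nu}}\Bigl|1-\bigl(1-K(y)\bigr)^{\nu}\Bigr|.
\]
Since $\bigl|1-(1-K(y))^{\nu}\bigr|=\mathcal{O}(K(y))$ and $\mathcal{L}(x)/x^{\nu}=\Lambda(1/x)=\Lambda(y)$, the exponent is $\mathcal{O}\bigl(\Lambda(y)K(y)\bigr)=\mathcal{O}(\Lambda(y))$, the final step using that $K(y)$ is bounded. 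Exponentiating and using $\exp(\mathcal{O}(\Lambda(y)))=1+\mathcal{O}(\Lambda(y))$, which is legitimate because $\Lambda(y)\to 0$, then gives \eqref{2.7}.

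I expect the main obstacle to be the varying endpoint $\lambda x$: the classical SR1 bound underlying \eqref{1.9}--\eqref{1.11} is stated for a fixed $\lambda$, whereas here $\lambda=\lambda(y)$ drifts to $1$. The cleanest route is to bypass that formulation and work directly with the integral $\int_{x}^{\lambda x}\varepsilon(t)/t\,\textrm{d}t$, where uniformity over the integration range (always with $t\asymp x$) is guaranteed once $\lambda$ is trapped in a compact set. One must also verify that the crude majorant $\mathcal{L}(t)\le 2\mathcal{L}(x)$ does not destroy the gain $K(y)$; it does not, because that factor originates solely from the length of the integration interval through $1-\lambda^{-\nu}$, and not from the size of the integrand.
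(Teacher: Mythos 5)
Your proof is correct, and it reaches the paper's estimate by a technically different (though closely related) route. The paper's own proof applies the mean value theorem to $\mathcal{L}$ itself: it writes $\mathcal{L}\left(x/(1-K)\right)-\mathcal{L}(x)=\mathcal{L}'(u)\cdot\frac{K}{1-K}\,x$ for an intermediate point $u$ between $x$ and $x/(1-K)$ (its \eqref{2.8}), and then invokes the pointwise derivative bound $\mathcal{L}'(u)=\mathcal{L}(u)\varepsilon(u)/u=\mathcal{O}\bigl(\mathcal{L}^{2}(u)/u^{1+\nu}\bigr)$ (its \eqref{2.9}, obtained by differentiating \eqref{1.10}) together with $u\sim x$ and $\mathcal{L}(u)\sim\mathcal{L}(x)$. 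You instead keep the exact exponential representation $\mathcal{L}(\lambda x)/\mathcal{L}(x)=\exp\int_{x}^{\lambda x}\varepsilon(t)t^{-1}\textrm{d}t$, substitute $\varepsilon(t)=-\mathcal{L}(t)/t^{\nu}$ (which under \eqref{1.12} is an exact identity, not merely a big-O), majorize $\mathcal{L}(t)\le 2\mathcal{L}(x)$ over the integration range by uniform convergence, and integrate the power $t^{-1-\nu}$ explicitly. Both arguments rest on the same key fact --- that the logarithmic derivative of $\mathcal{L}$ at $t$ is of order $\Lambda(1/t)/t$ --- so conceptually they are twins, differing only in whether the increment of $\mathcal{L}$ is controlled by a one-point derivative evaluation or by integrating the derivative bound. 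Your version has two small advantages: it confronts head-on the uniformity issue created by the drifting dilation factor $\lambda(y)\to 1$ (which the paper passes over with the unquantified claims $u\sim x$, $\mathcal{L}(u)\sim\mathcal{L}(x)$, themselves resting on the same uniform convergence theorem you cite), and it exhibits the remainder as $\mathcal{O}\bigl(\Lambda(y)K(y)\bigr)=o\bigl(\Lambda(y)\bigr)$, which is slightly sharper than the stated $\mathcal{O}\bigl(\Lambda(y)\bigr)$ and makes transparent that the gain comes from the shortness of the interval $[x,\lambda x]$ rather than from the size of the integrand.
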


\begin{proof}
    Since the function ${\mathcal {L}}(x) = x^\nu \Lambda \left( {{1 \mathord{\left/
    {\vphantom {1 x}} \right. \kern-\nulldelimiterspace} x}} \right)$ is
    differentiable, by virtue of the mean value theorem we have
\begin{equation}          \label{2.8}
    \mathcal{L}\left( {{x \over {1 - K}}} \right) - \mathcal{L}(x)
    = \mathcal{L}'\left( {{{1 - \gamma{K} } \over {1 - K}}x}
    \right) \cdot {{K}\over{1-K}}x  ,
\end{equation}
    where $K:=K\left({{1\mathord{\left/{\vphantom{1 x}}\right.\kern-\nulldelimiterspace}x}}
    \right)$ and $0<\gamma <1$. Since $\varrho(x)={\mathcal O}{\left(\mathcal{L}(x)/x^{\nu}\right)}$,
    from \eqref{1.10} and \eqref{1.11} it follows that
\begin{equation}          \label{2.9}
    \mathcal{L}'(u) = \mathcal{L}(u){ {{\varepsilon (u)} \over u}}
    = {\mathcal{O}}\left({{{\mathcal{L}^{2}(u)}\over {u^{1+\nu}}}}\right)
    \quad \parbox{2.2cm}{\textit{as} {} $u  \rightarrow \infty$.}
\end{equation}
    Denote $u = {{\left({1 -\gamma {K}}\right)x}\mathord{\left/
    {\vphantom {{\left({1- \gamma {K}} \right)x}{\left({1-K}
    \right)}}} \right. \kern-\nulldelimiterspace}{\left({1-K}\right)}}$.
    Since  $K\left({{1\mathord{\left/{\vphantom {1 x}}\right.
    \kern-\nulldelimiterspace} x}} \right) \to 0$ then $u\sim x$ and
    $\mathcal{L}(u)\sim \mathcal{L} (x)$ as $x \to \infty$. Therefore
    after using \eqref{2.9} in the equality \eqref{2.8} and some
    elementary transformations the assertion \eqref{2.7} readily follows.
\end{proof}

\section{The proofs of results}

\begin{proof}[Proof of Theorem~\ref{MyTh:1}]
    Putting $s=0$ in \eqref{2.1} we have
\begin{equation}          \label{3.1}
    {{1}\over{\Lambda \left(q(t)\right)}} = \nu{t}+{{1}\over {a_0}}+ {\mho{(t)}}
\end{equation}
    and by elementary arguments we get to assertion \eqref{1.13}.
    Similarly putting $s=0$ in \eqref{2.2}, we obtain \eqref{1.14}.
\end{proof}

\begin{proof}[Proof of Theorem~\ref{MyTh:2}]
    Considering together the backward and the forward Kolmogorov equations
    and seeing $\left[{f_{\nu}} \right]$ we write
\begin{equation*}
    {{\partial F(t;s)}\over{\partial s}}={{f\left({1-R(t;s)}\right)}\over{f(s)}}
    = {{R(t;s)\Lambda\left(R(t;s)\right) } \over {f(s)}}.
\end{equation*}
    Thence at $s=0$ we deduce
\begin{equation*}
    {P_{11}(t)} = {{q(t)\Lambda\left(q(t)\right) } \over {a_0}}.
\end{equation*}
    Hence using \eqref{1.13} and \eqref{1.14} the relations \eqref{1.15} and \eqref{1.16} easily follow.
\end{proof}

\begin{proof}[Proof of Theorem~\ref{MyTh:3}]
    It follows from \eqref{1.19} and $\left[{f_{\nu}} \right]$ that
\begin{equation}          \label{3.2}
    {G(t;s)}={{R^{1+\nu}(t;s)}\over{f(s)}}\mathcal{L}\left({{1\over{R(t;s)}}}\right).
\end{equation}
    On the other hand \eqref{2.1} entails
\begin{equation}          \label{3.3}
    R(t;s) = {{\mathcal{N}(t;s)} \over {(\nu t)^{{1 \mathord{\left/{\vphantom {1 \nu }}
    \right. \kern-\nulldelimiterspace}\nu}}}}{\left({1-{{{{\mho(t;s)}}}\over{{\nu}^{2}t}}
    {\left(1+{o}(1)\right)}} \right)}
    \quad \parbox{2.2cm}{\textit{as} {} $t  \to \infty$.}
\end{equation}
    where $\mathcal{N}(t;s):= {\mathcal{L}^{-{1 \mathord{\left/{\vphantom {1 \nu}} \right.
    \kern-\nulldelimiterspace} \nu }} \left( {{1 \mathord{\left/{\vphantom {1{R(t;s)}}}
    \right.\kern-\nulldelimiterspace}{R(t;s)}}}\right)}$ and ${\mho(t;s)} =
    {\int\nolimits_0^t {{\delta\left(R(u;s)\right)}\textrm{d}u}}=o(t)$
    as $t \rightarrow{\infty}$. From \eqref{3.3} we conclude that
\begin{equation*}
    R(t;s) = {q(t)} {{\mathcal{N}(t;s)} \over {\mathcal{N}(t)}}
    {\left({1-{{{{\mho(t;s)}}}\over{{\nu}^{2}t}}{\left(1+{o}(1)\right)}}\right)}
    \quad \parbox{2.2cm}{\textit{as} {} $t  \to \infty$.}
\end{equation*}
    Since ${R(t;s)}/{q(t)}\to{1}$ uniformly for $s\in[0, 1)$ then
    ${\mathcal{N}(t;s)}/{\mathcal{N}(t)}\to{1}$ for all $s\in[0, 1)$.
    But in accordance with \eqref{1.9} and \eqref{1.12}
\begin{equation*}
    {{\mathcal{L}\left({R^{-1}(t;s)}\right)} \over
    {\mathcal{L}\left(q^{-1}(t)\right)}}=1+\mathcal{O}{\left({1}\over{t}\right)}
\end{equation*}
    and therefore
\begin{equation}          \label{3.4}
    {{\mathcal{N}(t;s)}\over{\mathcal{N}(t)}}=1+\mathcal{O}{\left({1}\over{t}\right)}
    \quad \parbox{2.2cm}{\textit{as} {} $t  \to \infty$.}
\end{equation}
    Combining $\left[{f_{\nu}} \right]$ and \eqref{3.2}--\eqref{3.4} we obtain
\begin{equation}          \label{3.5}
    {G(t;s)} = {{\pi{(s)}\mathcal{N}(t)}\over {(\nu t)^{1+{1 \mathord{\left/{\vphantom {1 \nu}}\right.
    \kern-\nulldelimiterspace} \nu}}}}{\left({1-{{{{1+\nu}}}\over{{\nu}^{2}}}{{{{\mho(t;s)}}}\over{t}}
    {\left(1+{o}(1)\right)}} \right)}
    \quad \parbox{2.2cm}{\textit{as} {} $t  \to \infty$.}
\end{equation}
    The representation \eqref{1.20} with evanescent \eqref{1.22} follows from
    \eqref{2.6} and \eqref{3.5}.

    To show that $\pi{(s)}$ is GF of invariant measure,
    from \eqref{1.19} we obtain the following functional equation:
\begin{equation*}
    G(t + \tau ;s) = {{G\left(t;s\right)}
    \over {F(t;s)}} {G\left({\tau; F(t;s)} \right)}
    \quad \parbox{3cm}{\textit{for all} {} $\tau >0$}
\end{equation*}
    since $F(t + \tau ;s) = {F\left({\tau; F(t;s)} \right)}$; see {\cite{Sev51}}.
    Then taking limit as $\tau \to {\infty}$ it follows from this equation that
\begin{equation*}
    \pi(s) = {{G\left(t;s\right)} \over {F(t;s)}} {\pi\left({F(t;s)} \right)}.
\end{equation*}
    This is equivalent to the equation
\begin{equation*}
    \pi _j = \sum\limits_{i \in {\mathcal E}}{\pi _i {\mathcal Q}_{ij} (t)}.
\end{equation*}
    Thus $\left\{{\pi_j}, j\in{\mathcal E}\right\}$ is an invariant measure for MQP.
\end{proof}

\begin{proof}[Proof of Theorem~\ref{MyTh:4}]
        Consider the Laplace transform
\begin{equation*}
    \Psi(t;\theta):=\mathbb{E}e^{-\theta q(t)W(t)}= G\left( {t;\theta (t)} \right),
\end{equation*}
    where $\theta(t)=\exp \{-\theta q(t)\}$.
    From $\left[{f_{\nu}} \right]$ and \eqref{1.19} we write
\begin{equation}          \label{3.6}
    \Psi(t;\theta)=\theta(t)\cdot\left({{R\left({t;\theta(t)}\right)}\over{1-\theta(t)}}\right)^{1+\nu}
    \cdot {{{\mathcal{L} \left( {{1 \mathord{\left/{\vphantom {1{R\left(t;\theta(t)\right)}}}
    \right.\kern-\nulldelimiterspace}{R\left(t;\theta(t)\right)}}}\right)}} \over
    {{\mathcal{L} \left( {{1 \mathord{\left/{\vphantom {1{\left(1-\theta(t)\right)}}}
    \right.\kern-\nulldelimiterspace}{\left(1-\theta(t)\right)}}}\right)}}}.
\end{equation}

    It follows from \eqref{2.2} that
\begin{equation}          \label{3.7}
    {{1} \over {\Lambda \left(R\left(t;\theta(t)\right)\right)}}-{{1}\over
    {\Lambda \left(1-\theta(t)\right)}}=\nu{t}+{{\,1}\over{\,\nu}}\,{\ln{\left[{\Lambda
    {\left(1-\theta(t)\right)}}\nu{t}+1\right]}}+{o}{\bigl(\ln{t}\bigr)}
\end{equation}
    as $t \to \infty $. Since $1-e^{-x}\sim {x-{x^{2}/2}}$ as $x\to{0}$ then according to our designation
\begin{equation*}
    {\Lambda \left(1-\theta(t)\right)}={\theta^{\nu}}q^{\nu}(t)
    \mathcal{L}\left({{1}\over {1-\theta(t)}}\right) {\left(1-{{1}\over{2}}
    {\theta}q(t) \left(1+{o}{\bigl(1\bigr)}\right)\right)^{\nu}}
\end{equation*}
    as $t \to \infty$. By Lemma~\ref{MyLem:3} with $K(y)={y}/{2}$
\begin{equation}          \label{3.8}
    \mathcal{L}\left({{1}\over {1-\theta(t)}}\right)= \mathcal{L}\left({{1}\over {q(t)}}\right)
    \left(1+\mathcal{O}{\left({1}\over{t}\right)}\right)
    \quad \parbox{2.2cm}{\textit{as} {} $t  \to \infty$.}
\end{equation}
    Then
\begin{equation*}
    {\Lambda \left(1-\theta(t)\right)}={\theta^{\nu}}\Lambda\left({q(t)}\right)
    \left(1+\mathcal{O}{\left({1}\over{t}\right)}\right)
    \quad \parbox{2.2cm}{\textit{as} {} $t  \to \infty$,}
\end{equation*}
    since $q(t)=\mathcal{O}{\left({\mathcal{N}(t)}/{t^{1/\nu}}\right)}$ and $\nu <1$.
    Thence considering \eqref{3.1}
\begin{equation}          \label{3.9}
    {\Lambda \left(1-\theta(t)\right)}={{{\theta^{\nu}}}\over{\nu{t}}}
    \left(1- {{1}\over{\nu^{2}}}{{\ln{t}}\over{t}}\left(1+{o}{\bigl(1\bigr)}\right)\right)
    \quad \parbox{2.2cm}{\textit{as} {} $t  \to \infty$.}
\end{equation}

    Using \eqref{3.9} we can write \eqref{3.7} in the following form:
\begin{equation*}
    {{1} \over {\Lambda \left(R\left(t;\theta(t)\right)\right)}}={\nu{t}}{{{1+\theta^{\nu}}}\over{\theta^\nu}}
    \left(1- {{1}\over{1+\theta^\nu}}{{\ln{t}}\over{\nu^{2}t}}\left(1+{o}{\bigl(1\bigr)}\right)\right)
\end{equation*}
    and, therefore
\begin{equation}          \label{3.10}
    R\left(t;\theta(t)\right) = {{{\mathcal{N}_\theta}(t)} \over
    {(\nu{t})^{{1 \mathord{\left/ {\vphantom {1 \nu }} \right.\kern-
    \nulldelimiterspace} \nu }}}}{\theta  \over {\bigl( {1 + \theta ^\nu }
    \bigr)^{{1 \mathord{\left/{\vphantom {1 \nu }} \right.
    \kern-\nulldelimiterspace} \nu }}}}\left({1-{1 \over
    {{1 + \theta ^\nu}}}{{\ln{t}} \over{\nu^{3}t}}\bigl({1+o(1)}\bigr)}\right)
\end{equation}
    as $t \to \infty$, where $\mathcal{N}_\theta(t):= {\mathcal{L}^{-{1 \mathord{\left/{\vphantom{1 \nu}}
    \right.\kern-\nulldelimiterspace}\nu}}\left({{1\mathord{\left/{\vphantom{1{R\left(t;\theta(t)\right)}}}
    \right.\kern-\nulldelimiterspace}{R\left(t;\theta(t)\right)}}}\right)}$.

    Since ${{R(t;s)} \mathord{\left/ {\vphantom {{R(t;s)}{q(t)}}}
    \right. \kern-\nulldelimiterspace}{q(t)}}\to 1$ for all $s \in[0, 1)$,
    then by force of \eqref{3.10} it is necessary that
\begin{equation*}
    {{R\left(t;\theta(t)\right)} \over{q(t)}} \longrightarrow c(\theta)
    \quad \parbox{2.2cm}{\textit{as} {} $t  \to \infty$,}
\end{equation*}
    where $|c(\theta)|<\infty$ at any fixed $\theta \in {\left(0, \infty \right)}$.
    Therefore according to \eqref{1.9}
\begin{equation}          \label{3.11}
    {{{\mathcal L} \left({R^{-1}\left(t;\theta(t)\right)}\right)} \over
    {{\mathcal L} \left( {q^{-1}(t)} \right)}} = 1 +
    \mathcal{O}\bigl(\Lambda \left({q(t)} \right)\bigr)
    \quad \parbox{2.2cm}{\textit{as} {} $t  \to \infty$}
\end{equation}
    or the same that
\begin{equation*}
    {{{\mathcal{N}}_{\theta}(t)} \over {{\mathcal{N}}(t)}}
    = 1 + \mathcal{O}\left( {{1} \over {t}} \right)
    \quad \parbox{2.2cm}{\textit{as} {} $t  \to \infty$.}
\end{equation*}
    Thus \eqref{3.10} becomes
\begin{equation}          \label{3.12}
    R\left(t;\theta(t)\right) = {{{\mathcal{N}}(t)} \over
    {(\nu{t})^{{1 \mathord{\left/ {\vphantom {1 \nu }} \right.\kern-
    \nulldelimiterspace} \nu }}}}{\theta  \over {\bigl( {1 + \theta ^\nu }
    \bigr)^{{1 \mathord{\left/{\vphantom {1 \nu }} \right.
    \kern-\nulldelimiterspace} \nu }}}}\left({1-{1 \over
    {{1 + \theta ^\nu}}}{{\ln{t}} \over{\nu^{3}t}}\bigl({1+o(1)}\bigr)}\right)
\end{equation}
    as $t \to \infty$.

    Further using \eqref{3.8} and \eqref{3.11} we can rewrite \eqref{3.6} as
\begin{equation*}
    \Psi(t;\theta)=\left({{R\left({t;\theta(t)}\right)}\over{1-\theta(t)}}\right)^{1+\nu}
    \left(1+\mathcal{O}{\left({1}\over{t}\right)}\right)
    \quad \parbox{2.2cm}{\textit{as} {} $t  \to \infty$}
\end{equation*}
    and using \eqref{3.12} after some transformation we obtain
\begin{equation}          \label{3.13}
    \Psi(t;\theta)=\Psi(\theta)
    \left({1+{{\theta^\nu}\over{{1+\theta^\nu}}}{{1+\nu}\over{{\nu^3}}}
    {{\ln{t}}\over{t}}\bigl({1+o(1)}\bigr)}\right)
    \quad \parbox{2.2cm}{\textit{as} {} $t  \to \infty$.}
\end{equation}
    The assertion \eqref{1.23} follows from \eqref{3.13}.
\end{proof}

\par\medskip\noindent
{\bf Acknowledgment.} The authors are grateful to the anonymous referee for careful reading of the
    manuscript and for his kindly comments which contributed to improving the paper.

\par\bigskip\noindent

\bibliographystyle{amsplain}

\end{document}